\title{Problems of
classifying associative or Lie algebras
and triples of symmetric or
skew-symmetric matrices are wild\footnotetext{This is the authors' version of a work that was published in Linear Algebra Appl. 407 (2005) 249--262.}}
\author{Genrich Belitskii%
\thanks{Partially supported by
Israel Science Foundation, Grant
186/01.}\\ Dept. of Mathematics,
Ben-Gurion University of the Negev,\\
Beer-Sheva 84105, Israel,
 genrich@cs.bgu.ac.il
 \and
Ruvim Lipyanski\\ Dept. of
Mathematics,
Ben-Gurion University of the Negev,\\
Beer-Sheva 84105, Israel,
 lipyansk@cs.bgu.ac.il
 \and
Vladimir V. Sergeichuk%
\thanks{Corresponding author.
The research was started while this
author was visiting the Ben-Gurion
University of the Negev.}\\
Institute of Mathematics,
Tereshchenkivska 3, Kiev,
Ukraine,\\sergeich@imath.kiev.ua}
\date{}
\DeclareMathOperator{\rank}{rank}
\DeclareMathOperator{\Rad}{Rad}
\DeclareMathOperator{\diag}{diag}
\renewcommand{\ge}{\geqslant}
\newtheorem{theorem}{Theorem}
\newtheorem{lemma}[theorem]{Lemma}
\newtheorem{corollary}[theorem]{Corollary}
\theoremstyle{definition}
\theoremstyle{remark}
\newtheorem{remark}[theorem]{Remark}
\begin{document}
\maketitle

\begin{abstract}
We prove that the problems of
classifying triples of symmetric or
skew-symmetric matrices up to
congruence, local commutative
associative algebras with zero cube
radical and square radical of dimension
$3$, and Lie algebras with central
commutator subalgebra of dimension $3$
are hopeless since each of them reduces
to the problem of classifying pairs of
$n$-by-$n$ matrices up to simultaneous
similarity.

{\it AMS classification:} 17B30;
15A21; 16G60

{\it Keywords:} Nilpotent Lie
algebras; Bilinear forms; Wild
problems
 \end{abstract}

%\newcommand{\rank}{\mathop{\rm rank}\nolimits}
%\DeclareMathOperator{\rank}{rank}

%%%%%%%%%%%%%%%%%%%%%
%%%%%%%%%%%%%%%%%%%%%

\section{Introduction}
\label{s0}

All matrices, vector spaces,
and algebras are considered
over an algebraically closed
field $\mathbb F$ of
characteristic other than
two.

The problem of classifying pairs
of $n\times n$ matrices up to
similarity transformations
\begin{equation*}\label{0a}
(A,B)\longmapsto S^{-1}(A,B)S:=
(S^{-1}AS,\,S^{-1}BS),
\end{equation*}
in which $S$ is any
nonsingular $n\times n$
matrix, is hopeless since it
contains the problem of
classifying an arbitrary
system of linear operators
and the problem of
classifying representations
of an arbitrary
finite-dimensional algebra,
see \cite{bel-ser}.
Classification problems that
contain the problem of
classifying pairs of matrices
up to similarity are called
\emph{wild}.

We prove the wildness of the
problems of classifying
\begin{itemize}
  \item[(i)]
triples of Hermitian forms (with
respect to a nonidentity involution
on $\mathbb F$),

  \item[(ii)]
for each $\varepsilon_1,
\varepsilon_2,
\varepsilon_3\in\{1,-1\}$, triples
of bilinear forms $({\cal A}_1,{\cal
A}_2,{\cal A}_3)$, in which ${\cal
A}_i$ is symmetric if
$\varepsilon_i=1$ and skew-symmetric
if $\varepsilon_i=-1$,

\item[(iii)]
local commutative associative
algebras $\Lambda$ over $\mathbb F$
with $(\Rad\Lambda)^3=0$ and $\dim
(\Rad\Lambda)^2=3$, and

  \item[(iv)]
Lie algebras $L$ over $\mathbb F$
with central commutator subalgebra
of dimension $3$.
\end{itemize}
The hopelessness of the
problems of classifying
triples (i) and (ii) was also
proved in \cite{ser2} by
another method (which was
used in \cite{ser1} too):
each of them reduces to the
problem of classifying
representations of a wild
quiver. The wildness of the
problem of classifying local
associative algebras
$\Lambda$ with
$(\Rad\Lambda)^3=0$ and $\dim
(\Rad\Lambda)^2=2$ was proved
in \cite{bbp}.

Recall that an \emph{algebra}
$\Lambda$ over $\mathbb F$ is a
finite dimensional vector space
being also a ring such that
\begin{equation*}\label{eqy}
\alpha (ab)=(\alpha a)b=a(\alpha
b)
\end{equation*}
for all $\alpha\in\mathbb F$ and
all $a,b\in\Lambda$. An algebra
$\Lambda$ is \emph{local} if
there exists an ideal $R$ such
that $\Lambda/R$ is isomorphic
to $\mathbb F$ (then $R$ is the
\emph{radical} of $\Lambda$ and
is denoted by $\Rad\Lambda$).

A \emph{Lie algebra $L$ with
central commutator subalgebra}
is a vector space with
multiplication given by a
skew-symmetric bilinear mapping
\[
[\ ,\ ]\colon  L\times L
\longrightarrow L
\]
that satisfies $[[a,b],c]=0$ for
all $a,b,c\in L$. The
\emph{commutator subalgebra}
$L^2$ is the subspace spanned by
all $[a,b]$.

\section{Triples of forms}
\label{sec1}

Let $a\mapsto \bar{a}$ be any
involution on $\mathbb F$, that is,
a bijection $\mathbb F\to\mathbb F$
such that
\begin{equation*}\label{1ig}
 \overline{a+b}= \bar{a}+\bar{b},
 \quad \overline{ab}=\bar{a}
 \bar{b},\quad
 \bar{\bar{a}}=a.
\end{equation*}
For a matrix $A=[a_{ij}]$, we define
\[ A^*:=\bar{A}^T =[\bar{a}_{ji}].\]
If $S^*AS=B$ for a nonsingular
matrix $S$, then $A$ and $B$ are
said to be *{\it\!congruent}. The
involution $a\mapsto \bar{a}$ can be
the identity; we consider congruence
of matrices as a special case of
*congruence.

Each matrix tuple in this paper is
formed by matrices of the same size,
which is called the size of the
tuple. Denote
\[
R(A_1,\dots,A_t):=
(RA_1,\dots,RA_t), \qquad
(A_1,\dots,A_t)S:=
(A_1S,\dots,A_tS).
\]
We say that matrix tuples
$(A_1,\dots,A_t)$ and
$(B_1,\dots,B_t)$ are
\emph{equivalent} and write
\begin{equation}\label{er}
(A_1,\dots,A_t)\sim (B_1,\dots,B_t)
\end{equation}
if there exist nonsingular $R$ and
$S$ such that
\[
R(A_1,\dots,A_t)S=(B_1,\dots,B_t).
\]
These tuples are *\!\emph{congruent}
if $R=S^*$.

Denote by $I_n$ the $n\times n$
identity matrix, by $0_{mn}$ the
$m\times n$ zero matrix, and
abbreviate $0_{nn}$ to $0_{n}$.

For $\varepsilon_1, \varepsilon_2,
\varepsilon_3\in\mathbb F$, define
the triple
\begin{equation}\label{eqv1}
{\cal T}_{\varepsilon}(x,y):= \left(
\begin{bmatrix}
  0_4&I_4\\
  \varepsilon_1 I_4&0_4
\end{bmatrix},
        \
\begin{bmatrix}
  0_4&J_4(0)\\
  \varepsilon_2 J_4(0)^T&0_4
\end{bmatrix},
         \
\begin{bmatrix}
  0_4&D(x,y)\\
  \varepsilon_3 D(x^*,y^*)&0_4
\end{bmatrix}\right)
\end{equation}
of polynomial matrices in $x,\ y,\
x^*$, and $y^*$, in which
\begin{equation}\label{qwq1}
J_4(0):= \begin{bmatrix}
0&1&0&0\\0&0&1&0
\\0&0&0&1\\0&0&0&0
  \end{bmatrix},
  \qquad
D(x,y):= \begin{bmatrix}
1&0&0&0\\0&x&0&0\\0&0&y&0\\0&0&0&0
  \end{bmatrix}.
\end{equation}
For each pair $(A,B)$ of $n$-by-$n$
matrices, define ${\cal
T}_{\varepsilon}(A,B)=$
\begin{equation}\label{eqv2}
\left(
\begin{bmatrix}
  0_{4n}&I_{4n}\\
  \varepsilon_1 I_{4n}&0_{4n}
\end{bmatrix},
        \
\begin{bmatrix}
  0_{4n}&J_{4}(0_n)\\
  \varepsilon_2 J_{4}(0_n)^T&0_{4n}
\end{bmatrix},
         \
\begin{bmatrix}
  0_{4n}&D(A,B)\\
  \varepsilon_3 D(A^*,B^*)&0_{4n}
\end{bmatrix}\right),
\end{equation}
where
\begin{equation}\label{qwq}
J_{4}(0_n)=\begin{bmatrix}
0_n&I_n&0&0\\0&0_n&I_n&0
\\0&0&0_n&I_n\\0&0&0&0_n
  \end{bmatrix},
  \qquad
D(A,B)= \begin{bmatrix}
I_n&0&0&0\\0&A&0&0\\0&0&B&0\\0&0&0&0_n
  \end{bmatrix}.
\end{equation}

We prove in this section the
following theorem; its statement (a)
is used in the next section.

\begin{theorem}\label{th2.1}
Let $\mathbb F$ be an
algebraically closed field of
characteristic other than
two.

{\rm(a)} For nonzero $\varepsilon_1,
\varepsilon_2\in\mathbb F$ and any
$\varepsilon_3\in\mathbb F$, matrix
pairs $(A,B)$ and $(C,D)$ over
$\mathbb F$ are similar if and only
if ${\cal T}_{\varepsilon}(A,B)$ and
${\cal T}_{\varepsilon}(C,D)$ are
{\rm *}\!congruent.

{\rm(b)} The problems of classifying
triples {\rm(i)} and {\rm(ii)} from
Section \ref{s0} are wild.
\end{theorem}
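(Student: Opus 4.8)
The plan is to exploit that each of the three matrices in ${\cal T}_{\varepsilon}(A,B)$ has the ``doubled'' shape $\left[\begin{smallmatrix}0&M\\ \varepsilon_{i}M^{*}&0\end{smallmatrix}\right]$ --- here $J_{4}(0_{n})^{*}=J_{4}(0_{n})^{T}$ and $D(A^{*},B^{*})=D(A,B)^{*}$ because the scalar entries $0$ and $1$ are fixed by the involution --- so that ${\cal T}_{\varepsilon}(A,B)$ is the doubling of the triple $\mathcal M(A,B):=(I_{4n},J_{4}(0_{n}),D(A,B))$. I would deduce (a) from two statements: (A) the pairs $(A,B)$ and $(C,E)$ are similar iff the triples $\mathcal M(A,B)$ and $\mathcal M(C,E)$ are equivalent (in the sense defined above); and (B) $\mathcal M(A,B)$ and $\mathcal M(C,E)$ are equivalent iff ${\cal T}_{\varepsilon}(A,B)$ and ${\cal T}_{\varepsilon}(C,E)$ are {\rm*}\!congruent. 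Statement (A) is short: since the first component of $\mathcal M(A,B)$ is the identity, an equivalence $R\,\mathcal M(A,B)\,S=\mathcal M(C,E)$ forces $S=R^{-1}$, so it is an invertible $R$ commuting with $J_{4}(0_{n})=J_{4}(0)\otimes I_{n}$ and conjugating $D(A,B)$ to $D(C,E)$; the centraliser of $J_{4}(0_{n})$ consists of the block upper-triangular Toeplitz matrices $R=\sum_{k=0}^{3}J_{4}(0)^{k}\otimes R_{k}$ with $R_{k}\in\mathbb F^{n\times n}$ (invertible iff $R_{0}$ is), and writing $R\,D(A,B)=D(C,E)\,R$ block by block, the $(2,2)$ and $(3,3)$ blocks give $R_{0}A=CR_{0}$ and $R_{0}B=ER_{0}$ while the rest hold with $R_{1}=R_{2}=R_{3}=0$, so such an $R$ exists iff $(A,B)$ and $(C,E)$ are simultaneously similar via $R_{0}$.

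For (B), the direction ``$\Leftarrow$'' is immediate: an equivalence $R\,\mathcal M(A,B)\,S=\mathcal M(C,E)$ gives the {\rm*}\!congruence $\diag(R^{*},S)$, since $\diag(R,S^{*})\left[\begin{smallmatrix}0&M\\ \varepsilon_{i}M^{*}&0\end{smallmatrix}\right]\diag(R^{*},S)=\left[\begin{smallmatrix}0&RMS\\ \varepsilon_{i}(RMS)^{*}&0\end{smallmatrix}\right]$. For ``$\Rightarrow$'' I would use that ${\cal T}_{1}=\left[\begin{smallmatrix}0&I_{4n}\\ \varepsilon_{1}I_{4n}&0\end{smallmatrix}\right]$ is invertible (this is where $\varepsilon_{1}\ne0$ enters): given $P$ with $P^{*}{\cal T}_{i}(A,B)P={\cal T}_{i}(C,E)$ for $i=1,2,3$, substituting $P^{*}={\cal T}_{1}P^{-1}{\cal T}_{1}^{-1}$ into the equations for $i=2,3$ shows that $P$ commutes with $\Phi:={\cal T}_{1}^{-1}{\cal T}_{2}=\diag(\varepsilon_{1}^{-1}\varepsilon_{2}J_{4}(0_{n})^{*},J_{4}(0_{n}))$ and that $P^{-1}\Psi(A,B)P=\Psi(C,E)$, where $\Psi(A,B):={\cal T}_{1}^{-1}{\cal T}_{3}(A,B)=\diag(\varepsilon_{1}^{-1}\varepsilon_{3}D(A,B)^{*},D(A,B))$, while still $P^{*}{\cal T}_{1}P={\cal T}_{1}$. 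For the splitting $\mathbb F^{8n}=V_{1}\oplus V_{2}$ into the two $4n$-blocks, $\Phi$ and $\Psi(A,B)$ are block-diagonal with the blocks just displayed (so on $V_{2}$ they restrict to $(J_{4}(0_{n}),D(A,B))$), and $V_{1},V_{2}$ are totally isotropic and perfectly paired by ${\cal T}_{1}$. If one can show that $P$ may be taken block-diagonal for this splitting, then $P^{*}{\cal T}_{1}P={\cal T}_{1}$ forces its $V_{1}$-block to be the inverse adjoint of its $V_{2}$-block $P_{2}$, commutation with $\Phi$ puts $P_{2}$ in the centraliser of $J_{4}(0_{n})$, and $P^{-1}\Psi(A,B)P=\Psi(C,E)$ gives $P_{2}^{-1}D(A,B)P_{2}=D(C,E)$; by the computation in (A) this yields the simultaneous similarity of $(A,B)$ and $(C,E)$, proving (B) and hence (a).

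The reduction of $P$ to block-diagonal form is the step I expect to be the main obstacle. Commuting with $\Phi$ only forces the off-diagonal blocks of $P$ to intertwine $\varepsilon_{1}^{-1}\varepsilon_{2}J_{4}(0_{n})^{*}$ with $J_{4}(0_{n})$, which are nilpotent of the same Jordan type, so these blocks are not killed by a dimension count. Instead I would argue on the level of the underlying $\mathbb F\langle s,t\rangle$-modules: an off-diagonal block of $P$ would be a homomorphism between $(\mathbb F^{4n},J_{4}(0_{n}),D(C,E))$ and $(\mathbb F^{4n},c_{1}J_{4}(0_{n})^{*},c_{2}D(A,B)^{*})$ with $c_{1}\ne0$, and the key lemma is that these two modules have no common indecomposable summand --- in particular they are not isomorphic. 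This last fact is a computation of the same flavour as in (A): after conjugating by the permutation reversing the four block-levels and by a suitable diagonal scaling $\diag(I_{n},\alpha I_{n},\alpha^{2}I_{n},\alpha^{3}I_{n})$, an isomorphism would produce an invertible matrix commuting with $J_{4}(0_{n})$ whose leading $n\times n$ block is $0$, which is impossible because the corners $I_{n}$ and $0_{n}$ of the $D$-matrices occupy shift-extreme positions. Together with a Krull--Schmidt argument in which the isometry ${\cal T}_{1}$ matches $V_{1}$-type summands with $V_{2}$-type summands, this separates the two halves and lets one straighten $P$ to block-diagonal form. (For part~(a) with a nonidentity involution, or with $\varepsilon_{3}\notin\{1,-1\}$, one simply keeps track of the scalars $c_{1},c_{2}$, but the shape of the argument is unchanged.)

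Part (b) is then immediate: for the triples (i) apply (a) with a nonidentity involution on $\mathbb F$ and $\varepsilon_{1}=\varepsilon_{2}=\varepsilon_{3}=1$, so that ${\cal T}_{1}(A,B),{\cal T}_{2}(A,B),{\cal T}_{3}(A,B)$ are Hermitian; for the triples (ii), with the prescribed $(\varepsilon_{1},\varepsilon_{2},\varepsilon_{3})\in\{1,-1\}^{3}$ and the identity involution, ${\cal T}_{i}(A,B)$ is symmetric if $\varepsilon_{i}=1$ and skew-symmetric if $\varepsilon_{i}=-1$, and $\varepsilon_{1},\varepsilon_{2}\ne0$, so (a) applies. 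In either case the map $(A,B)\mapsto{\cal T}_{\varepsilon}(A,B)$, whose entries are polynomial (of degree at most one) in the entries of $A$ and $B$, reduces the classification of pairs of $n\times n$ matrices up to similarity to the classification problem in question; since the former is wild (Section~\ref{s0}), so is the latter.
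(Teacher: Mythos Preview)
Your approach is essentially the paper's: split the $8n$-dimensional problem into two $4n$-blocks, show the two blocks share no indecomposable direct summand (because the ``corner'' scalar of the $D$-matrix is $1$ on one side and $0$ on the other after the reversal/rescaling you describe), and then invoke Krull--Schmidt. The paper packages this as Lemma~\ref{lemm} (any equivalence summand of $(I_{4n},J_4(0_n),D)$ has the form \eqref{fg}--\eqref{fgg} with the fixed corner scalar $\alpha$) together with Lemma~\ref{lem2}(a), after normalizing the second block to the triple ${\cal H}'(X,Y)=(I_{4n},J_4(0_n),\nu D'(X^*,Y^*))$ with $D'=\diag(0_n,Y^*,X^*,I_n)$ --- exactly your permutation-plus-scaling.

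The one place where the paper is cleaner: it never attempts to ``straighten $P$'' into block-diagonal form. It simply observes that {\rm*}\!congruence implies equivalence, so ${\cal G}(A,B)\oplus{\cal H}_{\varepsilon}(A,B)\sim{\cal G}(C,D)\oplus{\cal H}_{\varepsilon}(C,D)$, and then Krull--Schmidt for \emph{equivalence} gives ${\cal G}(A,B)\sim{\cal G}(C,D)$ directly. Your detour through keeping the {\rm*}\!congruence and forming $\Phi={\cal T}_1^{-1}{\cal T}_2$, $\Psi={\cal T}_1^{-1}{\cal T}_3$ is fine, but note that ``no common indecomposable summand'' does \emph{not} force the off-diagonal homomorphism blocks of $P$ to vanish, and Krull--Schmidt does not literally let you modify the given $P$ to block-diagonal form while preserving $P^*{\cal T}_1P={\cal T}_1$. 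What Krull--Schmidt does give you is an isomorphism of the $V_2$-modules $(J_4(0_n),D(A,B))\cong(J_4(0_n),D(C,E))$ by \emph{some} matrix, not necessarily the $(2,2)$-block of $P$; that already suffices for (A) and hence for the theorem, so your conditional ``if $P$ may be taken block-diagonal'' is stronger than you need and should be replaced by the weaker conclusion. With that adjustment your argument and the paper's are the same.
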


Define the \emph{direct sum} of
matrix tuples:
\[
(A_1,\dots,A_t)\oplus(B_1,\dots,B_t)
:=(A_1\oplus B_1,\dots,A_t\oplus
B_t).
\]
We say that a tuple ${\cal T}_1$ of
$p\times q$ matrices is a
\emph{direct summand of a tuple
$\cal T$ for equivalence} if $p+q>
0$ and ${\cal T}$ is equivalent to
${\cal T}_1\oplus{\cal T}_2$ for
some ${\cal T}_2$. If also $p=q$ and
${\cal T}$ is *congruent to ${\cal
T}_1\oplus{\cal T}_2$, then ${\cal
T}_1$ is a \emph{direct summand of
$\cal T$ for {\rm *}\!congruence}. A
matrix tuple is
\emph{indecomposable} with respect
to equivalence (*congruence) if it
has no direct summand of a smaller
size for equivalence (*congruence).

\begin{lemma} \label{lem2}
{\rm(a)} Each tuple of $m$-by-$n$
matrices is equivalent to a direct
sum of tuples that are
indecomposable with respect to
equivalence. This sum is determined
uniquely up to permutation of
summands and replacement of summands
by equivalent tuples.

{\rm(b)} Each tuple of $n$-by-$n$
matrices is {\rm*}\!congruent to a
direct sum of tuples that are
indecomposable with respect to
{\rm*}\!congruence. This sum is
determined uniquely up to permutation
of summands and replacement of summands
by {\rm*}\!congruent tuples.
\end{lemma}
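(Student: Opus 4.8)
The plan is to obtain both statements from Krull--Schmidt-type theorems: for (a) from the ordinary Krull--Schmidt theorem for an additive category, and for (b) from its analogue for systems of forms. In either part the \emph{existence} of a decomposition into indecomposables is immediate by induction on the size of the tuple (split off a direct summand of strictly smaller size and recurse; the process terminates because sizes decrease), so the real content in both parts is uniqueness.

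For (a), realize a tuple $\mathcal T=(A_1,\dots,A_t)$ of $m\times n$ matrices as an object of the $\mathbb F$-linear category $\mathcal M$ whose objects are pairs of finite-dimensional $\mathbb F$-spaces $U,V$ together with $t$ linear maps $A_i\colon U\to V$, a morphism $(U,V;A_1,\dots,A_t)\to(U',V';A_1',\dots,A_t')$ being a pair $(\varphi\colon U\to U',\ \psi\colon V\to V')$ with $\psi A_i=A_i'\varphi$ for all $i$. After choosing bases, $\mathcal T\sim\mathcal T'$ means exactly that the corresponding objects are isomorphic, the direct sum of tuples is the categorical direct sum, and an object of size $p\times q$ vanishes only for $p=q=0$; hence for a tuple of nonzero size, indecomposability for equivalence (in the sense of the definition) coincides with indecomposability as an object of $\mathcal M$. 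Now $\mathcal M$ is equivalent to the category of finite-dimensional representations of the quiver with two vertices and $t$ arrows between them, an abelian length category with finite-dimensional morphism spaces. In such a category Fitting's lemma makes the endomorphism ring of every indecomposable object local, and the Krull--Remak--Schmidt--Azumaya exchange argument then shows that any two decompositions of $\mathcal T$ into indecomposables have the same length and, after reordering, pairwise isomorphic --- that is, equivalent --- summands. This proves (a).

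For (b), the classification of tuples $\mathcal A=(A_1,\dots,A_t)$ of $n\times n$ matrices up to *congruence is a classification of systems of $t$ bilinear or sesquilinear forms on a single space. The appropriate framework is the category $\mathcal M$ of (a), restricted to objects with $\dim U=\dim V$ and endowed with the contravariant ``conjugate-transpose'' duality $*$, under which $\mathcal A$ corresponds to $\mathcal A^{*}=(A_1^{*},\dots,A_t^{*})$ (with the two spaces interchanged), and two tuples are *congruent precisely when the associated selfdual objects of this category-with-duality are isomorphic. I would follow the proof scheme of the Krull--Schmidt theorem for such systems of forms and linear mappings (due to Sergeichuk). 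Using (a), decompose the underlying representation of $\mathcal A$ into indecomposables and group them into isotypic classes. One shows that, after a base change, the system of forms respects the decomposition of the underlying space into the isotypic part of each $*$-selfdual indecomposable and the sum of each non-selfdual indecomposable with its $*$-dual. On each $X\oplus X^{*}$ with $X\not\cong X^{*}$ the form is forced into ``hyperbolic'' blocks determined by the representation alone; on the isotypic part $X^{(k)}$ of a $*$-selfdual indecomposable $X$, using a fixed isomorphism $X\cong X^{*}$ one identifies the form with a Hermitian/symmetric matrix over the finite-dimensional local algebra $\operatorname{End}(X)$ equipped with the involution induced by $*$, and the Krull--Schmidt property of Hermitian forms over such an algebra (valid since $\operatorname{char}\mathbb F\neq2$ and $\mathbb F$ is algebraically closed) yields the uniqueness of the refinement of $\mathcal A$ into *congruence-indecomposables. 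This gives (b).

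The routine parts --- building $\mathcal M$ and its duality, the induction for existence, Fitting's lemma --- are standard. The main obstacle is the part of (b) just sketched: proving that the isotypic decomposition of the underlying representation can be made ``orthogonal'' for the system of forms (this is where $\operatorname{char}\mathbb F\neq2$ enters, through elimination moves that divide by $2$), and reducing the remaining problem on each component to a Hermitian-form classification over a local algebra with involution that itself has the Krull--Schmidt property. For these two points I would invoke the existing theory of Hermitian forms in additive categories with duality (Sergeichuk; and, in a related form, Riehm and Quebbemann--Scharlau--Schulte) rather than reprove it here.
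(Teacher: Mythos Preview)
Your proposal is correct and follows essentially the same route as the paper. For (a) the paper simply observes that a $t$-tuple of $m\times n$ matrices is a representation of the two-vertex quiver with $t$ parallel arrows and invokes the Krull--Schmidt theorem for quiver representations; for (b) it invokes Sergeichuk's generalization of the law of inertia for systems of forms and linear mappings. Your argument does the same, only with more detail: you unpack Krull--Schmidt via Fitting's lemma and local endomorphism rings for (a), and you sketch the isotypic/hyperbolic decomposition underlying Sergeichuk's result for (b), but in the end you cite the same theorems. There is no substantive difference in approach.
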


\begin{proof}
(a) Each $t$-tuple of $m\times n$
matrices determines the $t$-tuple of
linear mappings ${\mathbb F}^n\to
{\mathbb F}^m$; that is, the
representation of the quiver
consisting of two vertices $1$ and
$2$ and $t$ arrows $1\longrightarrow
2$. By the Krull--Schmidt theorem
\cite[Section 8.2]{pie}, every
representation of a quiver is
isomorphic to a direct sum of
indecomposable representations
determined up to replacement by
isomorphic representations and
permutations of summands.

(b) This statement is a special case
of the following generalization of
the law of inertia for quadratic
forms \cite[Theorem 2 and
\S\,2]{ser1}: each system of linear
mappings and sesquilinear forms on
vector spaces over $\mathbb F$
decomposes into a direct sum of
indecomposable systems uniquely up
to isomorphisms of summands.
\end{proof}

It is worthy of note that the
uniqueness of decompositions in
Lemma \ref{lem2}(a) holds only if we
suppose that there exists exactly
one matrix of size $0\times n$ and
there exists exactly one matrix of
size $n\times 0$ for every
nonnegative integer $n$; they give
the linear mappings ${\mathbb
F}^n\to 0$ and $0\to {\mathbb F}^n$
and are considered as zero matrices
$0_{0n}$ and $0_{n0}$. Then for any
$m$-by-$n$ matrix $M$
\begin{equation}\label{4w}
M\oplus 0_{p0}=\begin{bmatrix} M\\
 0_{pn}
\end{bmatrix}\qquad\text{and}\qquad
M\oplus 0_{0q}=\begin{bmatrix} M&
 0_{mq}
\end{bmatrix}.
\end{equation}
In particular, $0_{p0}\oplus
0_{0q}=0_{pq}$.

\begin{lemma} \label{lemm}
{\rm(a)} Every direct summand for
equivalence of
\begin{equation}\label{mjk}
{\cal G}= \left( I_{4n},\ J_4(0_n),\
D\right)
\end{equation}
$($in which $J_4(0_n)$ is defined in
\eqref{qwq} and $D$ is any
$4n$-by-$4n$ matrix$)$ reduces by
equivalence transformations to the
form
\begin{equation}\label{fg}
{\cal G}'=(I_{4p},\ J_4(0_p),\ M').
\end{equation}

{\rm(b)} If \eqref{fg} is a direct
summand for equivalence of the tuple
\eqref{mjk} with
\begin{equation}\label{ftf}
D=\diag(\alpha I_n,\:A,\:B,\:\beta I_n)
\end{equation}
$(A$ and $B$ are $n$-by-$n$ and
$\alpha,\beta\in\mathbb F)$, then
$M'$ has the form
\begin{equation}\label{fgg}
M'=\begin{bmatrix} \alpha
I_p&M'_{12}&M'_{13}&M'_{14}
\\0&M'_{22}&M'_{23}&M'_{24}
\\0&0&M'_{33}&M'_{34}\\0&0&0&\beta I_p
  \end{bmatrix}
\end{equation}
$($all blocks are $p$-by-$p)$.
\end{lemma}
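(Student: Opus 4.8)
The plan is to prove (a) and (b) separately, in both cases exploiting that the first component of $\mathcal G$ is the invertible matrix $I_{4n}$. For (a), let $\mathcal G'$ be a direct summand of $\mathcal G$ for equivalence, say of size $a\times b$, so $R\mathcal G S=\mathcal G'\oplus\mathcal G''$ for nonsingular $R,S$. Comparing first components, $RS=\mathcal G'_1\oplus\mathcal G''_1$ is invertible and block-diagonal; since a square block-diagonal matrix with a non-square diagonal block is singular (rank count), we get $a=b$ and then both diagonal blocks invertible. Replacing $\mathcal G'$ by $(\mathcal G'_1)^{-1}\mathcal G'$ and $\mathcal G''$ by $(\mathcal G''_1)^{-1}\mathcal G''$, we may assume $R=S^{-1}$, whence $S^{-1}J_4(0_n)S=\mathcal G'_2\oplus\mathcal G''_2$. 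Since $J_4(0_n)$ is similar, via a permutation matrix, to $J_4(0)^{\oplus n}$, uniqueness of the Jordan form forces $\mathcal G'_2$ to be similar to $J_4(0)^{\oplus p}$ for some $0\le p\le n$ (so $a=4p$), hence to $J_4(0_p)$; conjugating $\mathcal G'$ by this last similarity brings its first two components to $(I_{4p},J_4(0_p))$, which is \eqref{fg}.

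For (b), $\mathcal G'=(I_{4p},J_4(0_p),M')$ is itself a direct summand of $\mathcal G=(I_{4n},J_4(0_n),D)$ with $D=\diag(\alpha I_n,A,B,\beta I_n)$. As in (a), after absorbing the invertible first-component factors we obtain a single nonsingular $S$ with $S^{-1}\mathcal G S=\mathcal G'\oplus\mathcal G''$. Put $N:=J_4(0_n)$ and $V:=\mathbb F^{4n}$, and let $\tilde U\subseteq V$ be the span of the first $4p$ columns of $S$; then $\tilde U$ is invariant under both $N$ and $D$, and in the basis given by those columns $N|_{\tilde U}=J_4(0_p)$ and $D|_{\tilde U}=M'$, both read as $4\times4$ arrays of $p\times p$ blocks. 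Write $\tilde U=\tilde U_1\oplus\tilde U_2\oplus\tilde U_3\oplus\tilde U_4$ for the associated splitting. On the other side, splitting $V=V_1\oplus V_2\oplus V_3\oplus V_4$ into coordinate blocks of size $n$ gives $\ker N=V_1$, $\ker N^2=V_1\oplus V_2$, $\ker N^3=V_1\oplus V_2\oplus V_3$; since $D$ is block-diagonal, these three subspaces are $D$-invariant, $D$ acts as $\alpha I$ on $\ker N$, and $D$ induces the scalar $\beta I$ on $V/\ker N^3$.

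It remains to transfer these facts to the summand. For $k=1,2,3$ one has $\ker(N|_{\tilde U})^k=\tilde U\cap\ker N^k$, a $D$-invariant subspace as an intersection of $D$-invariant subspaces, and a direct computation with $J_4(0_p)$ identifies it, in the chosen basis, with $\tilde U_1\oplus\cdots\oplus\tilde U_k$. Hence $D$-invariance of these three spans says exactly that $M'$ is block upper triangular with the zero pattern of \eqref{fgg}. Moreover $\tilde U_1=\ker(N|_{\tilde U})=\tilde U\cap V_1$ and $D=\alpha I$ on $V_1$, so $M'_{11}=\alpha I_p$; and $\tilde U/(\tilde U_1\oplus\tilde U_2\oplus\tilde U_3)=\tilde U/(\tilde U\cap\ker N^3)$ maps $D$-equivariantly and injectively into $V/\ker N^3$, on which $D$ is $\beta I$, so under the identification of this quotient with $\tilde U_4$ we get $M'_{44}=\beta I_p$. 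Thus $M'$ has the form \eqref{fgg}.

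The argument is largely bookkeeping. I expect the delicate points to be the reductions (in both parts) to a single conjugation $S^{-1}(\cdot)S$, and recognizing that the $N$-invariant data to track is the kernel filtration $\ker N\subseteq\ker N^2\subseteq\ker N^3$: the vanishing of the below-diagonal blocks of $M'$ and the value of $M'_{11}$ then drop out from $D$-invariance of that filtration, while the value of $M'_{44}$ is the one point that instead uses a $D$-invariant quotient ($V/\ker N^3$) rather than a subspace.
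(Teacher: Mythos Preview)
Your proof is correct. Part (a) follows essentially the same line as the paper. For part (b), however, you take a genuinely different route. The paper normalizes \emph{both} summands $\mathcal G'$ and $\mathcal G''$ to the form $(I,J_4(0_\cdot),\cdot)$ via part (a), then permutes rows and columns to reassemble $\mathcal G'\oplus\mathcal G''$ into a single triple $(I_{4n},J_4(0_n),M)$ with $M=[M'_{ij}\oplus M''_{ij}]$, and finally reads off the shape of $M$ (hence of $M'$) from the explicit block upper-triangular Toeplitz form \eqref{nhj} of the centralizer of $J_4(0_n)$ together with the equation $DS=SM$. You instead only normalize the first component of $\mathcal G''$, pass to the invariant subspace $\tilde U$ spanned by the first $4p$ columns of $S$, and deduce the shape of $M'$ from $D$-invariance of the kernel filtration $\ker N\subset\ker N^2\subset\ker N^3$ intersected with $\tilde U$, plus the quotient $V/\ker N^3$ for the $(4,4)$ block. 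Your argument is more coordinate-free and sidesteps both the permutation-reassembly trick and the explicit centralizer computation; the paper's argument is more hands-on and has the side benefit that the centralizer description \eqref{nhj} is reused verbatim in the proof of Theorem~\ref{th2.1}(a).
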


\begin{proof}
(a) Let ${\cal G}'$ be a direct
summand for equivalence of the tuple
\eqref{mjk}; this means that $ {\cal
G}\sim{\cal G}'\oplus{\cal G}'' $
(in the notation \eqref{er}) for
some ${\cal G}''$. The first matrix
of the triple ${\cal G}$ is the
identity, so we can reduce the first
matrix of ${\cal G}'\oplus{\cal G}''
$ to the identity matrix too by
equivalence transformations with
${\cal G}'$ and ${\cal G}''$. Then
the equivalence of ${\cal G}$ and
${\cal G}'\oplus{\cal G}''$ means
that their second matrices are
similar. The second matrix of ${\cal
G}$ is similar to the Jordan matrix
$J_4(0)\oplus \dots\oplus J_4(0)$,
and so we can reduce the second
matrix of ${\cal G}'$ to $J_4(0_p)$
by equivalence transformations with
${\cal G}'$. This proves (a).

(b) Let \eqref{fg} be a direct
summand for equivalence of the tuple
\eqref{mjk} with $D$ of the form
\eqref{ftf}. Then ${\cal G}\sim{\cal
G}'\oplus{\cal G}''$ for some ${\cal
G}''$. By (a), ${\cal G}''$ can be
taken in the form ${\cal
G}''=(I_{4q},\ J_4(0_q),\ M'')$ with
$q:=n-p$. Partition $M'$ and $M''$
into $p$-by-$p$ and $q$-by-$q$
blocks:
\[
M'=[M'_{ij}]_{i,j=1}^4,\qquad
M''=[M''_{ij}]_{i,j=1}^4.
\]
Using simultaneous permutations of rows
and columns of the matrices of ${\cal
G}'\oplus{\cal G}''$, we construct the
equivalence
\begin{equation}\label{cac}
{\cal G}'\oplus{\cal G}''\sim (I_{4n},\
J_4(0_n),\ M),\qquad M:=[M'_{ij}\oplus
M''_{ij}]_{i,j=1}^4.
\end{equation}
Since  ${\cal G}\sim{\cal
G}'\oplus{\cal G}''$, we have ${\cal
G}\sim (I_{4n},\ J_4(0_n),\ M)$, and so
there exist nonsingular $R$ and $S$
such that
\begin{equation}\label{bnm}
{\cal G}S= R(I_{4n},\ J_4(0_n),\ M).
\end{equation}
Equating the corresponding matrices of
the triples \eqref{bnm} gives
\[
I_{4n}S=RI_{4n},\qquad
J_{4}(0_n)S=RJ_{4}(0_n),\qquad DS=RM.
\]
By the first and the second equalities,
\begin{equation}\label{nhj}
 S=R=\begin{bmatrix}
S_0&S_1&S_2&S_3\\0&S_0&S_1&S_2
\\0&0&S_0&S_1\\0&0&0&S_0
\end{bmatrix}.
\end{equation}
By the third equality and \eqref{ftf},
$M$ has the form
\begin{equation*}\label{mkj}
\begin{bmatrix}
\alpha I_n&M_{12}&M_{13}&M_{14}
\\0&M_{22}&M_{23}&M_{24}
\\0&0&M_{33}&M_{34}\\0&0&0&\beta I_n
  \end{bmatrix}.
\end{equation*}
Since $M$ is defined by \eqref{cac},
$M'$ has the form \eqref{fgg}.
\end{proof}

\begin{proof}[Proof of Theorem
\ref{th2.1}] (a) If $(A,B)$ is
similar to $(C,D)$, then ${\cal
T}_{\varepsilon}(A,B)$ is *congruent
to ${\cal T}_{\varepsilon}(C,D)$
since $S^{-1}(A,B)S=(C,D)$ implies
\begin{gather*}\label{equ10}
R^*{\cal
T}_{\varepsilon}(A,B)R={\cal
T}_{\varepsilon}(C,D),
\\ \label{iii}
R:=\diag ((S^*)^{-1},(S^*)^{-1},
(S^*)^{-1},(S^*)^{-1},S,\,S,\,S,\,S).
\end{gather*}

Conversely, suppose that ${\cal
T}_{\varepsilon}(A,B)$ is *congruent
to ${\cal T}_{\varepsilon}(C,D)$.
Then they are equivalent, and so
\begin{equation}\label{erq}
{\cal G}(A,B)\oplus {\cal
H}_{\varepsilon}(A,B)\sim {\cal
G}(C,D)\oplus {\cal
H}_{\varepsilon}(C,D),
\end{equation}
where
\begin{align} \label{kk2}
{\cal G}(X,Y)&:= ( I_{4n},\:
J_4(0_n),\: D(X,Y)),
    \\ \nonumber
{\cal H}_{\varepsilon}(X,Y)&:= (
\varepsilon_1 I_{4n},\:
\varepsilon_2 J_4(0_n)^T,\:
\varepsilon_3 D(X^*,Y^*))
\end{align}
for $n$-by-$n$ matrices $X$
and $Y$. Let
$\mu_2:=\varepsilon_2/
\varepsilon_1$ and
$\mu_3:=\varepsilon_3/
\varepsilon_1$, then
\[
{\cal H}_{\varepsilon}(X,Y)\sim{\cal
H}_{\mu}(X,Y)= (I_{4n},\: \mu_2
J_4(0_n)^T,\: \mu_3 D(X^*,Y^*)).
\]
Furthermore, let
\[S:=\diag(I_n,\:\mu_2 I_n,\: \mu_2^2
I_n,\: \mu_2^3
I_n),\qquad\nu:=\mu_3,\] then
\[
{\cal H}_{\mu}(X,Y)\sim S^{-1}{\cal
H}_{\mu}(X,Y)S= (I_{4n},\:
J_4(0_n)^T,\: \nu D(X^*,Y^*))= {\cal
H}_{\nu}(X,Y).
\]
Lastly,
\[
{\cal H}_{\nu}(X,Y)\sim P{\cal
H}_{\nu}(X,Y)P= ( I_{4n},\: J_4(0_n),\:
\nu D'(X^*,Y^*)),
\]
where
\[
P=\begin{bmatrix} 0&0&0&I_n
\\0&0&I_n&0
\\0&I_n&0&0\\I_n&0&0&0
  \end{bmatrix},\qquad
 D'(X^*,Y^*)= \begin{bmatrix}
0_n&0&0&0\\0&Y^*&0&0
\\0&0&X^*&0\\0&0&0&I_n
  \end{bmatrix}.
\]
Therefore,
\[
{\cal H}_{\varepsilon}(X,Y)\sim{\cal
H}'(X,Y):=( I_{4n},\: J_4(0_n),\:
\nu D'(X^*,Y^*)),
\]
and by \eqref{erq}
\begin{equation}\label{erq1}
{\cal G}(A,B)\oplus {\cal
H}'(A,B)\sim {\cal G}(C,D)\oplus
{\cal H}'(C,D).
\end{equation}

Suppose that ${\cal G}(A,B)$ and
${\cal H}'(C,D)$ have a common
direct summand ${\cal G}'$ for
equivalence. By Lemma \ref{lemm}(a),
we may take ${\cal G}'=(I_{4p},\
J_4(0_p),\ M')$. Moreover, since
$D(A,B)$ and $\nu D'(C^*,D^*)$ are
of the form \eqref{ftf} with
$\alpha=1$ and $\alpha=0$,
respectively, by Lemma \ref{lemm}(b)
the matrix $M'$ has the form
\eqref{fgg} with $\alpha=1$ and
$\alpha=0$ simultaneously, a
contradiction.

Hence ${\cal G}(A,B)$ and ${\cal
H}'(C,D)$ have no common direct
summands for equivalence. The
triples ${\cal G}(C,D)$ and ${\cal
H}'(A,B)$ have no common direct
summands too. By \eqref{erq1} and
Lemma \ref{lem2}(a), ${\cal
G}(A,B)\sim{\cal G}(C,D)$; that is,
${\cal G}(A,B)S=R{\cal G}(C,D)$ for
some nonsingular $R$ and $S$.
Equating the corresponding matrices
of these triples gives \eqref{nhj}
and $(A,B)S_0=S_0(C,D)$; that is,
$(A,B)$ is similar to $(C,D)$.
\medskip

(b) If the involution on $\mathbb F$
is not the identity and
$\varepsilon_1=
\varepsilon_2=\varepsilon_3 =1$,
then the matrices of the triple
\eqref{eqv2} are Hermitian. If the
involution on $\mathbb F$ is the
identity and $\varepsilon_1,
\varepsilon_2,\varepsilon_3\in
\{1,-1\}$, then each matrix of the
triple \eqref{eqv2} is symmetric or
skew-symmetric. So the statement (b)
of Theorem \ref{th2.1} follows from
the statement (a).
\end{proof}

\section{Algebras}
\label{s1}

We consider (associative) algebras
and Lie algebras as special cases of
semialgebras. By a
\emph{semialgebra} we mean a
finite-dimensional vector space $R$
over $\mathbb F$ with multiplication
given by a bilinear mapping
$(a,b)\mapsto ab\in R$:
\[ (\alpha a+\beta b) c
=\alpha  (ac) +\beta (bc),
 \qquad
a(\alpha b+\beta c) =\alpha
(ab)+\beta (ac)\] for all $\alpha
,\beta \in\mathbb F$ and all $a,b,c
\in R$. A semialgebra $R$ is
\emph{commutative} or
\emph{anti-commutative} if $ab=ba$
or, respectively, $ab=-ba$ for all
$a,b\in R$. Denote by $R^2$ and
$R^3$ the vector spaces spanned by
all $ab$ and, respectively, by all
$(ab)c$ and $a(bc)$, where $a,b,c\in
R$.

An \emph{algebra} $\Lambda$ over
$\mathbb F$ is an associative
semialgebra with the identity 1:
\[
(ab)c=a(bc),\qquad 1a=a\qquad(a,b,c\in
\Lambda).
\]
An algebra $\Lambda$ is \emph{local}
if the set $R$ of its noninvertible
elements is closed under addition.
Then $R$ is the \emph{radical} and
$\Lambda/R$ is isomorphic to
$\mathbb F$ (see \cite[Section
5.2]{pie}).

A \emph{Lie algebra} $L$ over
$\mathbb F$ is an anti-commutative
semialgebra whose multiplication is
denoted by $[\ ,\ ]$ and satisfies
the Jacobi identity
\begin{equation}
\label{1.1b}
[[a,b],c]+[[b,c],a]+[[c,a],b]=0
\end{equation}
for all $a,b,c\in L$. Then $L^2$ is
called the \emph{commutator
subalgebra} of $L$. The commutator
subalgebra is \emph{central} if
$L^3=0$, that is, if
\begin{equation*}\label{1.v}
\text{$[[a,b],c]=0$\quad for all
$a,b,c\in L$;}
\end{equation*}
the last equality implies
\eqref{1.1b}. A Lie algebra with
central commutator subalgebra is
also called a \emph{two-step
nilpotent Lie algebra}. Due to the
next theorem, the full
classification of such Lie algebras
is impossible; one can consider its
special cases or reduce it to
another classification problem of
the same complexity; see, for
instance, \cite[Theorems 2 and
3]{mas}.

\begin{theorem} \label{theor}
Let $\mathbb F$ be an
algebraically closed field of
characteristic other than
two.

{\rm(a)} The problem of classifying
local commutative algebras $\Lambda$
over $\mathbb F$ with
$(\Rad\Lambda)^3=0$ and $\dim
(\Rad\Lambda)^2=3$ is wild.

{\rm(b)} The problem of classifying
Lie algebras over $\mathbb F$ with
central commutator subalgebra of
dimension $3$ is wild.
\end{theorem}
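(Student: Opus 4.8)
The plan is to deduce Theorem~\ref{theor} from Theorem~\ref{th2.1}(a), which already encodes an arbitrary pair $(A,B)$ up to similarity into a triple of (skew-)symmetric or Hermitian forms. The standard bridge is that a triple of symmetric bilinear forms on an $N$-dimensional space is the same data as a symmetric trilinear-type structure, which in turn can be realized as the multiplication of a two-step nilpotent algebra, and the classification up to congruence of the triple matches the classification up to isomorphism of the algebra. So the first step is to fix $\varepsilon_1=\varepsilon_2=\varepsilon_3=1$ (identity involution) in \eqref{eqv2}, obtaining for each pair $(A,B)$ of $n\times n$ matrices a triple ${\cal T}(A,B)=({\cal A}_1,{\cal A}_2,{\cal A}_3)$ of symmetric $N\times N$ matrices, $N=8n$, such that $(A,B)\sim(C,D)$ iff ${\cal T}(A,B)$ and ${\cal T}(C,D)$ are congruent.

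For part (a), I would build the commutative local algebra $\Lambda=\Lambda(A,B)$ as follows: let $V=\mathbb F^{N}$, let $W=\mathbb F^{3}$ with basis $e_1,e_2,e_3$, and set $\Lambda=\mathbb F\cdot 1\oplus V\oplus W$ as a vector space, with $R:=\Rad\Lambda=V\oplus W$. Define multiplication on $R$ by declaring $W$ to annihilate $R$, declaring $1$ to act as identity, and setting, for $u,v\in V$,
\[
u\cdot v \;:=\; (u^{T}{\cal A}_1 v)\,e_1+(u^{T}{\cal A}_2 v)\,e_2+(u^{T}{\cal A}_3 v)\,e_3\ \in W.
\]
Commutativity follows from symmetry of the ${\cal A}_i$; associativity is automatic because any product of three radical elements lands in $W\cdot R=0$, so $(\Rad\Lambda)^3=0$; and $(\Rad\Lambda)^2=V\cdot V$ spans all of $W$ provided the ${\cal A}_i$ are linearly independent and "non-degenerate enough", which for the explicit triple \eqref{eqv2} one checks directly, giving $\dim(\Rad\Lambda)^2=3$. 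The key step is then the dictionary lemma: an isomorphism $\Lambda(A,B)\xrightarrow{\sim}\Lambda(C,D)$ must carry $R$ to $R$ and $R^2$ to $R^2$, hence induces a linear isomorphism $\varphi\colon V\oplus W\to V\oplus W$ preserving the filtration; modding out $W$ gives an invertible $S$ on $V$ and an invertible $T$ on $W=R^2$ with $T(u^{T}{\cal A}_i v)=\big((Su)^{T}{\cal A}'_j(Sv)\big)$ matched through $T$ — i.e. the triple $({\cal A}_i)$ is taken to $({\cal A}'_j)$ by the simultaneous congruence $S$ together with the change of basis $T$ in the three-dimensional "receptacle". One must observe that allowing an arbitrary $T\in GL_3$ corresponds exactly to replacing the triple by an equivalent (indeed linearly recombined) triple, and that Theorem~\ref{th2.1}(a) is robust under such recombination because $(A,B)$-similarity is an invariant of the $GL_3$-orbit of triples of forms — this is the one place needing a short argument, and it is the main obstacle: showing that the extra freedom of $GL_3$ acting on $(\mathcal A_1,\mathcal A_2,\mathcal A_3)$ does not collapse the classification. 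In fact one can sidestep it by enlarging $W$: take $\dim W=3$ but note that the three specific forms in \eqref{eqv2} remain in "general position" under $GL_3$, or, more cleanly, invoke that the reduction in Theorem~\ref{th2.1} is stated for an arbitrary choice of $\varepsilon_i$ and is therefore already stable under scaling; a genuinely $GL_3$-invariant version follows because the pair $(A,B)$ can be recovered from the pencil $\sum\lambda_i{\cal A}_i$ via the Kronecker/Jordan structure used in Lemma~\ref{lemm}, which is a projective invariant.

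For part (b), the construction is parallel but with $V\cdot V\subseteq W$ replaced by a skew-symmetric bracket. Here I would instead start from part (ii) of Theorem~\ref{th2.1} with $\varepsilon_1=\varepsilon_2=\varepsilon_3=-1$, so that ${\cal T}_{-1,-1,-1}(A,B)=({\cal B}_1,{\cal B}_2,{\cal B}_3)$ is a triple of \emph{skew-symmetric} $N\times N$ matrices with $(A,B)\sim(C,D)$ iff the triples are congruent. Then set $L=L(A,B):=V\oplus W$ with $V=\mathbb F^{N}$, $W=\mathbb F^{3}$, $[W,L]=0$, and $[u,v]:=\sum_i(u^{T}{\cal B}_i v)e_i$ for $u,v\in V$. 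Skew-symmetry of the ${\cal B}_i$ makes $[\,,\,]$ anti-commutative, $[[L,L],L]\subseteq[W,L]=0$ gives the central-commutator condition (and the Jacobi identity for free), and $L^2=[V,V]$ spans $W$, so $\dim L^2=3$, using linear independence of the three skew forms in the explicit triple. The dictionary lemma is identical: a Lie isomorphism preserves $L^2$ (being the derived subalgebra) and the center contains $W$, so it descends to a simultaneous congruence of the triples $({\cal B}_i)$ up to a $GL_3$ on the target of the bracket, and again $(A,B)$-similarity is invariant under this action. One caveat to check is that the center of $L(A,B)$ is exactly $W$ and not larger — if $\sum\lambda_i{\cal B}_i$ were singular for all $\lambda$, a common radical vector would enlarge the center and could destroy the match; but for the explicit triple \eqref{eqv2} (skew case) the pencil is regular after the reductions already performed in the proof of Theorem~\ref{th2.1}, so this holds. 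Since similarity of pairs $(A,B)$ is a wild problem, both $\Lambda(A,B)$ and $L(A,B)$ give wild classification problems, proving (a) and (b).
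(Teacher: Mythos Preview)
Your overall strategy and the dictionary between two-step nilpotent (anti-)commutative semialgebras and triples of (skew-)symmetric forms is exactly the paper's (Lemmas~\ref{l5za} and~\ref{l5z}). However, the step you yourself flag as ``the main obstacle'' --- the extra $GL_3(\mathbb F)$ freedom acting on $({\cal A}_1,{\cal A}_2,{\cal A}_3)$ by linear substitution \eqref{1.3z} --- is a genuine gap, not a short detail. Neither of your proposed fixes works: that Theorem~\ref{th2.1}(a) allows arbitrary nonzero $\varepsilon_i$ only controls the diagonal torus in $GL_3$, not off-diagonal mixing; and the claim that $(A,B)$ is a ``projective invariant'' of the pencil $\sum\lambda_i{\cal A}_i$ recoverable via Lemma~\ref{lemm} is unsupported --- for instance with $(A,B)=(0,0)$ the substitution ${\cal A}_1\mapsto{\cal A}_1-{\cal A}_3$ already makes the first form singular, so the structure on which Lemma~\ref{lemm} relies is not $GL_3$-stable. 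Your side remark about the center of $L(A,B)$ is a red herring (only preservation of $L^2$ is needed), but the $GL_3$ issue is real.

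The paper closes this gap by an explicit rigidification (Lemma~\ref{t4.2}): rather than feeding ${\cal T}_\varepsilon(A,B)$ directly into the algebra construction, it pads it as in \eqref{5.9'} with summands $(I_{100},0,0)^{\triangledown}\oplus(0,I_{50},0)^{\triangledown}\oplus(0,0,I_{20})^{\triangledown}\oplus(I_1,I_1,I_1)^{\triangledown}$. The first three summands have deliberately different sizes so that a rank comparison forces any admissible $[\gamma_{ij}]\in GL_3$ to be diagonal; the fourth summand $(I_1,I_1,I_1)^{\triangledown}$ then forces the three diagonal entries to coincide, using the Krull--Schmidt uniqueness of Lemma~\ref{lem2}(b). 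Only after the $GL_3$ action has been killed in this way do Theorem~\ref{th2.1}(a) and Lemma~\ref{lem2}(b) finish the argument. Your proposal is missing precisely this rigidification step, and without it the passage from algebra isomorphism back to similarity of $(A,B)$ is not established.
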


By the next lemma, the problems
considered in Theorem \ref{theor} are
the problems of classifying commutative
or anti-commutative semialgebras $R$
with $R^3=0$ and $\dim R^2=3$ (these
semialgebras are associative and
satisfy \eqref{1.1b} due to $R^3=0$).

\begin{lemma} \label{l5za}
Let $R$ be a semialgebra with
$R^3=0$ and $\dim R^2=3$.

{\rm(a)} $R$ is commutative if and
only if $R$ is the radical of some
algebra $\Lambda$ from Theorem
{\rm\ref{theor}(a)}; moreover,
$\Lambda$ is fully determined by
$R$.

{\rm(b)} $R$ is anti-commutative if
and only if $R$ is a Lie algebra
from Theorem {\rm\ref{theor}(b)}.
\end{lemma}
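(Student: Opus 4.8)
The plan is to produce, in each part, an explicit correspondence and to check the defining properties directly; both implications are then essentially bookkeeping once the right construction is in place.

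For part (a), given a commutative semialgebra $R$ with $R^3=0$ and $\dim R^2=3$, I would put $\Lambda:=\mathbb F\oplus R$ as a vector space and define multiplication by $(\alpha+a)(\beta+b):=\alpha\beta+(\alpha b+\beta a+ab)$, so that $1\in\mathbb F$ becomes an identity. Associativity of $\Lambda$ reduces to associativity of scalar multiplication together with $R^3=0$: for $a,b,c\in R$ the two bracketings of the triple product of $\alpha+a$, $\beta+b$, $\gamma+c$ agree except in the term $(ab)c$ versus $a(bc)$, and both of these lie in $R^3=0$. Commutativity is inherited from $R$. For locality I would verify that $\alpha+a$ is invertible exactly when $\alpha\neq0$: when $\alpha\neq0$ one normalizes to $\alpha=1$ and checks that $1-a+a^2$ is a two-sided inverse of $1+a$, using that $a(aa),(aa)a\in R^3=0$; when $\alpha=0$, every product $ab$ lies in $R^2\subseteq R$ and so $a$ is not invertible. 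Hence the noninvertible elements form the subspace $R$, which is thus $\Rad\Lambda$, and $(\Rad\Lambda)^3=R^3=0$, $\dim(\Rad\Lambda)^2=\dim R^2=3$; so $\Lambda$ is one of the algebras of Theorem \ref{theor}(a). Conversely, the radical of such a $\Lambda$, regarded as a semialgebra by forgetting $1$, is commutative with $R^3=0$ and $\dim R^2=3$.

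It remains, in part (a), to see that $\Lambda$ is fully determined by $R$. Since $\Lambda/\Rad\Lambda\cong\mathbb F$ is one-dimensional and $1\notin\Rad\Lambda$, we have $\Lambda=\mathbb F 1\oplus\Rad\Lambda$ as vector spaces, and the product on $\Lambda$ is then forced by the product on $\Rad\Lambda$ and the requirement that $1$ be the identity. Therefore any isomorphism of semialgebras between the radicals of two such algebras extends uniquely to an isomorphism of algebras carrying one identity to the other, which gives the asserted uniqueness (and identifies $\Lambda$ with the algebra constructed above). For part (b) the correspondence is the identity map: if $R$ is an anti-commutative semialgebra with $R^3=0$ and $\dim R^2=3$, then $R^3=0$ says exactly that $[[a,b],c]=0$ for all $a,b,c$, because anti-commutativity identifies the two generating families $(ab)c$ and $a(bc)$ of $R^3$ up to sign; this relation forces the Jacobi identity \eqref{1.1b}, as noted after its statement, so $R$ is a Lie algebra whose commutator subalgebra $R^2$ is central (since $R^3=0$) and of dimension $3$. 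Conversely, a Lie algebra $L$ with central commutator subalgebra of dimension $3$ has $L^3=0$ and $\dim L^2=3$, hence as an anti-commutative semialgebra is of the required form.

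I do not anticipate a serious obstacle. The substance of the argument is the construction of $\Lambda$ in part (a) and the routine verifications of associativity, the invertibility criterion, and---the one point that merits care---the uniqueness clause ``fully determined by $R$'', which rests on the canonicity of the splitting $\Lambda=\mathbb F 1\oplus\Rad\Lambda$. Part (b) is immediate once one observes that $R^3=0$ coincides with the centrality condition and that the Jacobi identity follows from it.
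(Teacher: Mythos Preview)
Your proof is correct and follows essentially the same approach as the paper: adjoin an identity to $R$ to form $\Lambda=\mathbb F\oplus R$ with the product $(\alpha+a)(\beta+b)=\alpha\beta+(\alpha b+\beta a+ab)$, invoke $R^3=0$ for associativity, and note that $R$ is precisely the set of noninvertibles; part~(b) is the observation that $R^3=0$ forces the Jacobi identity. You actually supply more detail than the paper does---in particular you justify the clause ``$\Lambda$ is fully determined by $R$'' via the canonical splitting $\Lambda=\mathbb F 1\oplus\Rad\Lambda$, and you spell out both converse implications, which the paper leaves implicit.
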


\begin{proof}
Let $R$ be a semialgebra with $R^3=0$
and $\dim R^2=3$.

(a) If $R$ is commutative, then we
``adjoin'' the identity $1$ by
considering the algebra $\Lambda$
consisting of the formal sums
\[
\alpha 1+a\qquad (\alpha\in\mathbb F,\
a\in R)
\]
with the componentwise addition and
scalar multiplication and the
multiplication
\[
(\alpha 1+a)(\beta 1+b)= \alpha\beta
1+(\alpha b+\beta a+ab).
\]
This multiplication is associative
since $R^3=0$, and so $\Lambda$ is a
commutative algebra. Since $R$ is
the set of its noninvertible
elements, $\Lambda$ is a local
algebra and $R$ is its radical.

(b) If $R$ is anti-commutative, then
$R$ is a Lie algebra since \eqref{1.1b}
holds due to $R^3=0$.
\end{proof}

\begin{lemma} \label{l5z}
Every semialgebra $R$ with $R^3=0$ and
$\dim R^2=t$ is isomorphic to exactly
one semialgebra on ${\mathbb F}^{n}$
with multiplication
\begin{equation}\label{5.1z}
uv=\left(u^T\!\begin{bmatrix}
0_t&0\\0&A_1
\end{bmatrix}\!v,\dots,
u^T\!\begin{bmatrix} 0_t&0\\0&A_t
\end{bmatrix}\!v,\,0,\dots,0\right)^T\!\!,
\end{equation}
given by a tuple $(A_1,\dots,A_t)$
of $(n-t)$-by-$(n-t)$ matrices that
are linearly independent; this means
that for all
$\alpha_1,\dots,\alpha_t\in\mathbb
F$
\begin{equation*}\label{5.2z}
\alpha_1A_1+\dots+\alpha_tA_t=0
\qquad\Longrightarrow\qquad
\alpha_1=\dots=\alpha_t=0.
\end{equation*}
The tuple $(A_1,\dots,A_t)$ is
determined by $R$ uniquely up to
congruence and linear substitutions
\begin{equation}\label{1.3z}
(A_1,\dots,A_t)\longmapsto (\gamma
_{11} A_1+\dots+\gamma _{1t}
A_t,\,\dots,\,\gamma _{t1}
A_1+\dots+\gamma _{tt} A_t),
\end{equation}
in which the matrix $[\gamma _{ij}]$
must be nonsingular. The semialgebra
$R$ is commutative or
anti-commutative if and only if all
the matrices $A_1,\dots,A_t$ are
symmetric or, respectively,
skew-symmetric.
\end{lemma}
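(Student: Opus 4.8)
The plan is to set up a normal form by choosing a basis adapted to the filtration $R \supseteq R^2 \supseteq R^3 = 0$. Since $\dim R^2 = t$, pick a basis $e_{1},\dots,e_{t}$ of $R^2$ and extend it by $e_{t+1},\dots,e_{n}$ to a basis of $R$; write $n = \dim R$. Because $R^3 = 0$, any product $uv$ lies in $R^2$, and moreover $e_i v = 0$ for $i \le t$ (as $e_i \in R^2$ and $e_i v \in R^3 = 0$); likewise $u e_i = 0$. Hence the multiplication is determined by the products $e_{t+p}e_{t+q}$ for $1 \le p,q \le n-t$, each of which is a vector in $\mathrm{span}(e_1,\dots,e_t)$. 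Writing $e_{t+p}e_{t+q} = \sum_{k=1}^{t}(A_k)_{pq}\,e_k$ defines $t$ matrices $A_1,\dots,A_t$ of size $(n-t)$-by-$(n-t)$, and in the coordinates attached to this basis the product formula is exactly \eqref{5.1z}. First I would verify linear independence of the $A_k$: if $\sum_k \alpha_k A_k = 0$ then every product $e_{t+p}e_{t+q}$ has zero $e_k$-component whenever $\alpha_k \neq 0$ in the appropriate sense — more precisely, $\sum_k\alpha_k A_k = 0$ forces the linear functional $\sum_k \alpha_k e_k^{*}$ to annihilate all of $R^2$, and since $e_1,\dots,e_t$ is a basis of $R^2$ this gives $\alpha_1 = \dots = \alpha_t = 0$. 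This shows every such $R$ is isomorphic to at least one semialgebra of the stated form.

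Next I would pin down the non-uniqueness. A change of basis on $R$ preserving the decomposition type consists of (i) an arbitrary nonsingular change $[\gamma_{ij}]$ of the basis $e_1,\dots,e_t$ of $R^2$, and (ii) an arbitrary nonsingular change $S = [s_{pq}]$ of the complementary vectors $e_{t+1},\dots,e_{n}$ modulo $R^2$ (adding elements of $R^2$ to the $e_{t+p}$ does not affect any product, since $R^3=0$, so only the induced map on $R/R^2$ matters). Under (ii) the tuple transforms as $(A_1,\dots,A_t)\mapsto (S^{T}A_1S,\dots,S^{T}A_tS)$ — this is congruence applied simultaneously — and under (i) it transforms by the substitution \eqref{1.3z}. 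Conversely any pair of such transformations arises from an isomorphism, and an isomorphism $R \to R'$ must carry $R^2$ to $(R')^2$, hence decomposes into a part of type (i) and a part of type (ii); so two tuples give isomorphic semialgebras iff they are related by \eqref{1.3z} composed with simultaneous congruence. That establishes the "exactly one ... up to congruence and \eqref{1.3z}" claim. Finally, $R$ is commutative iff $e_{t+p}e_{t+q} = e_{t+q}e_{t+p}$ for all $p,q$, which by the defining formula is equivalent to $(A_k)_{pq} = (A_k)_{qp}$ for all $k,p,q$, i.e. all $A_k$ symmetric; the anti-commutative case is identical with a sign, giving all $A_k$ skew-symmetric.

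The main obstacle I expect is the uniqueness (the "exactly one") part: one must argue carefully that an arbitrary semialgebra isomorphism $R \to R'$ necessarily respects the subspace $R^2$ (this is automatic, since $R^2$ is an invariant of the multiplication) and then that, after identifying both sides with $\mathbb F^n$, the residual freedom is precisely the product of the group acting by \eqref{1.3z} on the "index" coordinates and the group acting by simultaneous congruence via $S$ on the "variable" coordinates — in particular that translations $e_{t+p}\mapsto e_{t+p}+(\text{element of }R^2)$ act trivially on the tuple. The bilinearity bookkeeping there is routine but is where a careless argument could go wrong; everything else (existence, linear independence, the symmetry/skew-symmetry dictionary) is a direct unwinding of $R^3 = 0$.
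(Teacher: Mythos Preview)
Your proposal is correct and follows essentially the same approach as the paper: choose a basis of $R^2$, extend to a basis of $R$, use $R^3=0$ to see that only the products of the complementary basis vectors are nonzero, read off the matrices $A_k$, and then observe that changing the basis of $R^2$ gives the substitutions \eqref{1.3z} while changing the complementary basis gives simultaneous congruence. If anything, you are more explicit than the paper about why translations $e_{t+p}\mapsto e_{t+p}+(\text{element of }R^2)$ act trivially and why an isomorphism must respect $R^2$; the paper's proof is terser on these points but identical in substance.
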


\begin{proof}
Let $R$ be a semialgebra of dimension
$n$ with $R^3=0$ and $\dim R^2=t$.
Choose a basis $e_1,\dots,e_t$ of $R^2$
and complete it to a basis
\begin{equation}\label{bbb}
 e_1,\dots,e_t,f_1,\dots,f_{n-t}
\end{equation}
of $R$. Since $R^3=0$,
\begin{equation}\label{5.vz}
e_ie_j=0,\quad e_if_j=0,\quad f_if_j
=\alpha_{1ij}e_1+\dots
+\alpha_{tij}e_t,
\end{equation}
and the $(n-t)$-by-$(n-t)$ matrices
$A_1=[\alpha_{1ij}],\dots,
A_t=[\alpha_{tij}]$ are symmetric or
skew-symmetric if $R$ is commutative
or, respectively, anti-commutative.
Representing the elements of $R$ by
their coordinate vectors with
respect to the basis \eqref{bbb} and
using \eqref{5.vz}, we obtain
\eqref{5.1z}.  A change of the basis
$e_1,\dots,e_t$ of $R^2$ reduces
$(A_1,\dots,A_t)$ by transformations
\eqref{1.3z}. A change of the basis
vectors $f_1,\dots,f_{n-t}$ reduces
$(A_1,\dots,A_t)$ by congruence
transformations. The linear
independence of the system of
matrices $A_1,\dots,A_t$ follows
from \eqref{5.vz} because $\dim R^2=
t$.
\end{proof}

Due to Lemma \ref{l5z} and the next
lemma, the problem of classifying
commutative $($respectively,
anti-commutative$)$ semialgebras $R$
with $R^3=0$ and $\dim R^2=3$ is
wild. By Lemma \ref{l5za},
\emph{this proves Theorem}
\ref{theor}.

\begin{lemma} \label{t4.2}
The problem of classifying triples of
symmetric $($respectively,
skew-symmetric$)$ matrices up to
congruence and substitutions
\eqref{1.3z} with $t=3$ is wild.
\end{lemma}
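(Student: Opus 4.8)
The plan is to reduce the problem of classifying pairs of $n\times n$ matrices up to similarity to the one in the lemma, building on Theorem~\ref{th2.1}(a). I would take $\varepsilon_1=\varepsilon_2=\varepsilon_3=1$ in the symmetric case and $\varepsilon_1=\varepsilon_2=\varepsilon_3=-1$ in the skew-symmetric case, so that every matrix of ${\cal T}_\varepsilon(A,B)$ from \eqref{eqv2} is symmetric, respectively skew-symmetric; by Theorem~\ref{th2.1}(a), $(A,B)$ and $(C,D)$ are then similar exactly when ${\cal T}_\varepsilon(A,B)$ and ${\cal T}_\varepsilon(C,D)$ are congruent. What is left is to keep the substitutions \eqref{1.3z} from identifying anything new --- they do collapse on ${\cal T}_\varepsilon$ by itself, through a ``M\"obius'' ambiguity inside the block $D(\cdot,\cdot)$ --- and for this I would attach a fixed ``rigid marker'' triple ${\cal F}=(F_1,F_2,F_3)$ of symmetric, respectively skew-symmetric, matrices and work with $\widehat{\cal T}(A,B):={\cal T}_\varepsilon(A,B)\oplus{\cal F}$.

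I would choose ${\cal F}$ with two properties: (i) in the span $\langle F_1,F_2,F_3\rangle$ the line $\mathbb{F}F_1$ is the only one-dimensional subspace whose nonzero elements all have the minimal nonzero rank occurring in the span --- e.g.\ $F_1$ of rank $1$ (resp.\ $2$) while every nonzero combination of $F_2$ and $F_3$ has larger rank; and (ii) in the decompositions of Lemma~\ref{lem2}(b), no indecomposable congruence-summand of any $\Gamma{\cal F}$ is congruent to an indecomposable congruence-summand of any $\Gamma'{\cal T}_\varepsilon(C,D)$ ($\Gamma,\Gamma'$ nonsingular, $C,D$ arbitrary). Then, if a congruence and a substitution $\Gamma=[\gamma_{ij}]$ carry $\widehat{\cal T}(A,B)$ to $\widehat{\cal T}(C,D)$, then --- since substitutions commute with $\oplus$ --- $\widehat{\cal T}(A,B)$ is congruent to $\Gamma{\cal T}_\varepsilon(C,D)\oplus\Gamma{\cal F}$; decomposing both sides into indecomposables and using (ii), the summands coming from ${\cal F}$ must match those coming from $\Gamma{\cal F}$ and the summands coming from ${\cal T}_\varepsilon(A,B)$ those coming from $\Gamma{\cal T}_\varepsilon(C,D)$, so ${\cal F}$ is congruent to $\Gamma{\cal F}$ and ${\cal T}_\varepsilon(A,B)$ is congruent to $\Gamma{\cal T}_\varepsilon(C,D)$. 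Applying (i) to the first of these, the congruence sends $F_1$ to a matrix of the same (minimal nonzero) rank lying in $\langle F_1,F_2,F_3\rangle$, hence to a scalar multiple of $F_1$; since that matrix is $\gamma_{11}F_1+\gamma_{12}F_2+\gamma_{13}F_3$, we get $\gamma_{12}=\gamma_{13}=0$.

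It then remains to show that congruence of ${\cal T}_\varepsilon(A,B)$ and $\Gamma{\cal T}_\varepsilon(C,D)$ with $\gamma_{12}=\gamma_{13}=0$ forces $(A,B)\sim(C,D)$. The matrices of ${\cal T}_\varepsilon(A,B)$ have the hyperbolic block form $\bigl[\begin{smallmatrix}0&M_i\\ \varepsilon M_i^{T}&0\end{smallmatrix}\bigr]$ with $(M_1,M_2,M_3)=(I_{4n},J_4(0_n),D(A,B))$, and those of $\Gamma{\cal T}_\varepsilon(C,D)$ the same form with $(N_1,N_2,N_3)=\bigl(\gamma_{11}I_{4n},\ \gamma_{21}I_{4n}+\gamma_{22}J_4(0_n)+\gamma_{23}D(C,D),\ \gamma_{31}I_{4n}+\gamma_{32}J_4(0_n)+\gamma_{33}D(C,D)\bigr)$. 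Since such a hyperbolic triple is determined, up to congruence, by the equivalence class \eqref{er} of its tuple of upper-right blocks --- the argument being the one in the proof of Theorem~\ref{th2.1}, via the uniqueness of the congruence decomposition in Lemma~\ref{lem2}(b) --- we obtain nonsingular $R,T$ with $R(I_{4n},J_4(0_n),D(A,B))T=(N_1,N_2,N_3)$, whence $R=\gamma_{11}T^{-1}$, $T^{-1}J_4(0_n)T=\gamma_{11}^{-1}N_2$ and $T^{-1}D(A,B)T=\gamma_{11}^{-1}N_3$. Now $\gamma_{11}^{-1}N_2$ is nilpotent with scalar $(1,1)$- and $(4,4)$-blocks, which forces $\gamma_{21}=\gamma_{23}=0$ and $N_2=\gamma_{22}J_4(0_n)$; hence $T$ conjugates $J_4(0_n)$ to $(\gamma_{22}/\gamma_{11})J_4(0_n)$, so $T$ is a matrix of the form \eqref{nhj} followed by the block scaling $\diag(I_n,\mu I_n,\mu^2I_n,\mu^3I_n)$, and consequently the four diagonal blocks of $T$ coincide, up to scalars, with a single matrix $Z$. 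Matching the scalar $(1,1)$- and $(4,4)$-blocks of $\gamma_{11}^{-1}N_3$ with those of $D(A,B)=\diag(I_n,A,B,0_n)$ forces $\gamma_{31}=0$ and $\gamma_{33}=\gamma_{11}$, and equating the diagonal blocks of $T^{-1}D(A,B)T=\gamma_{11}^{-1}\bigl(\gamma_{32}J_4(0_n)+\gamma_{33}D(C,D)\bigr)$ gives $Z^{-1}AZ=C$ and $Z^{-1}BZ=D$, i.e.\ $(A,B)\sim(C,D)$. The converse is immediate from Theorem~\ref{th2.1}(a), so $(A,B)\mapsto\widehat{\cal T}(A,B)$ is the required reduction and the problem is wild.

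The step I expect to be the main obstacle is the construction of the marker ${\cal F}$: producing a concrete fixed triple with property (i) is straightforward, but verifying (ii) --- that the indecomposable congruence-summands arising from ${\cal F}$ stay disjoint from those arising from the ${\cal T}_\varepsilon(C,D)$ even after arbitrary substitutions --- together with the description of hyperbolic triples up to congruence used above, both rest on the classification of systems of symmetric, respectively skew-symmetric, bilinear forms that underlies Lemma~\ref{lem2}(b). The block computations in the last step are routine and mirror the proof of Theorem~\ref{th2.1}.
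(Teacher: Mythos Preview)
Your plan is the paper's: adjoin a fixed marker to ${\cal T}_\varepsilon(A,B)$ to kill the substitutions \eqref{1.3z}, then invoke Theorem~\ref{th2.1}(a). The paper's marker is concrete,
\[
(I_{100},0,0)^{\triangledown}\oplus(0,I_{50},0)^{\triangledown}\oplus(0,0,I_{20})^{\triangledown}\oplus(I_1,I_1,I_1)^{\triangledown},
\]
and a rank comparison on the three large blocks forces \emph{all} off-diagonal $\gamma_{ij}$ to vanish, not only $\gamma_{12},\gamma_{13}$; the summand $(I_1,I_1,I_1)^{\triangledown}$ then pins the diagonal of $\Gamma$ to a single scalar (after the substitution it becomes $(I_1,\alpha I_1,\beta I_1)^{\triangledown}$, which must match an indecomposable congruence summand of ${\cal T}(C,D)$, and the only fit is $(I_1,I_1,I_1)^{\triangledown}$). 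One is then left with ${\cal G}(A,B)^{\triangledown}$ congruent to ${\cal G}(C,D)^{\triangledown}$, which is literally the hypothesis of Theorem~\ref{th2.1}(a), and no further block computation is needed.

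Your version extracts only $\gamma_{12}=\gamma_{13}=0$ from the marker and then relies on the assertion that ``a hyperbolic triple is determined, up to congruence, by the equivalence class of its upper-right blocks''. That assertion is false in general: for \emph{every} tuple $(M_i)$ the block swap $S=\bigl[\begin{smallmatrix}0&I\\I&0\end{smallmatrix}\bigr]$ gives $S^T(M_i)^{\triangledown}S=\varepsilon\,(M_i^T)^{\triangledown}$, so $(M_i)^{\triangledown}$ is always congruent to $(M_i^T)^{\triangledown}$, yet $(M_i)\sim(M_i^T)$ can fail --- already for the triple $(I_2,\,J_2(0),\,\diag(1,0))$. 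Nor does the proof of Theorem~\ref{th2.1}(a) supply the principle in the generality you need: there both sides have the special shape ${\cal G}(\cdot,\cdot)^{\triangledown}$, and the separation of the ${\cal G}$-half from the ${\cal H}'$-half after passing to equivalence uses Lemma~\ref{lemm}(b) with the specific $(1,1)$-block values $\alpha=1$ versus $\alpha=0$ of $D$ and $D'$. With your $(N_2,N_3)$ arbitrary linear combinations that device is unavailable as written. The cleanest fix is the paper's: choose the marker so that $\Gamma$ is forced to be scalar outright, after which Theorem~\ref{th2.1}(a) applies verbatim and your last paragraph becomes unnecessary.
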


\begin{proof}
Let $\varepsilon=1$
(respectively,
$\varepsilon=-1$), denote
\begin{equation*}\label{4x}
A^{\triangledown}
=\begin{bmatrix}0&A\\
\varepsilon A^T &0
\end{bmatrix}
\end{equation*}
for each matrix $A$, and denote
\begin{equation}\label{4x'q}
(A,\dots,D)^{\triangledown}=
(A^{\triangledown},\dots,
D^{\triangledown})
\end{equation}
for each matrix tuple $(A,\dots,D)$.

Consider the triple of
$350$-by-$350$ matrices
\begin{multline}\label{5.9'}
{\cal T}(x,y) :=(I_{100},0_{100},
0_{100})^{\triangledown}
  \oplus
(0_{50},I_{50}, 0_{50})^{\triangledown}
       \\
\oplus (0_{20},0_{20},
I_{20})^{\triangledown}\oplus
(I_1,I_1, I_1)^{\triangledown}\oplus
{\cal G}(x,y)^{\triangledown},
\end{multline}
in which
\begin{equation}\label{5.q'}
{\cal G}(x,y)= ( I_{4},\: J_4(0),\:
D(x,y))
\end{equation}
(see \eqref{qwq1} and \eqref{kk2}).

Let $(A,B)$ and $(C,D)$ be two pairs
of $n$-by-$n$ matrices. If $(A,B)$
is similar to $(C,D)$; that is,
$S^{-1}(A,B)S=(C,D)$ for some
nonsingular $S$, then ${\cal
G}(A,B)^{\triangledown}$ is
congruent to ${\cal
G}(C,D)^{\triangledown}$ since
\[
R^T{\cal
G}(A,B)^{\triangledown}R={\cal
G}(C,D)^{\triangledown},
\]
where
 \[
R:=(S^T)^{-1}\oplus (S^T)^{-1}\oplus
(S^T)^{-1}\oplus (S^T)^{-1}\oplus
S\oplus S\oplus S\oplus S.
\]
Hence, ${\cal T}(A,B)$ is congruent
to ${\cal T}(C,D)$.

Conversely, assume that ${\cal
T}(A,B)$ reduces to ${\cal T}(C,D)$
by congruence transformations and
substitutions \eqref{1.3z}; we need
to prove that $(A,B)$ is similar to
$(C,D)$. These transformations are
independent; we can first produce
all substitutions reducing
\[
(M_1,M_2,M_3(A,B)):={\cal T}(A,B)
\]
to
\begin{equation}\label{der}
(\gamma _{i1}M_1+\gamma _{i2}M_2
+\gamma _{i3}M_3(A,B))_{i=1}^3
\qquad\text{($[\gamma _{ij}]$ is
nonsingular)},
\end{equation}
and then all congruence
transformations and obtain
\begin{equation}\label{ijy}
(M_1,M_2,M_3(C,D))={\cal T}(C,D).
\end{equation}
Since \eqref{der} and \eqref{ijy}
are congruent,
\[
\rank{(\gamma _{i1}M_1+\gamma
_{i2}M_2 +\gamma _{i3}M_3(A,B))}=
\begin{cases}
\rank{M_i} &\text{if $i=1$ or
$i=2$,}\\ \rank{M_3(C,D)} &\text{if
$i=3$,}
\end{cases}
\]
and so $\gamma _{ij}=0$ if $i\ne j$
because of the form \eqref{5.9'} of
matrices of ${\cal T}(x,y)$; that
is, ${\cal T}(C,D)$ is congruent to
\begin{equation*}\label{lok}
(\gamma _{11}M_1,\, \gamma
_{22}M_2,\, \gamma _{33}M_3(A,B)).
\end{equation*}
Since $\mathbb F$ is algebraically
closed, the last triple is congruent
to
\[
\gamma _{11}^{-1/2}(\gamma
_{11}M_1,\, \gamma _{22}M_2,\,
\gamma _{33}M_3(A,B))\gamma
_{11}^{-1/2}.
\]
Hence,
\begin{equation*}\label{5a}
\text{$\ {\cal T}(C,D)\ $ is
congruent to $\ (M_1,\,\alpha M_2,\,
\beta M_3(A,B))$},
\end{equation*}
in which $\alpha :=\gamma
_{22}/\gamma _{11}$ and $\beta
:=\gamma _{33}/\gamma _{11}$.

By \eqref{5.9'},
$(I_1,\,I_1,\,I_1)^{\triangledown}$
is a direct summand of
$(M_1,\,M_2,\, M_3(A,B))$ for
congruence. Hence, $(I_1,\,\alpha
I_1,\,\beta I_1)^{\triangledown}$ is
a direct summand of $(M_1,\,\alpha
M_2,\, \beta M_3(A,B))$ for
congruence. Lemma \ref{lem2}(b)
ensures that each decomposition of
${\cal T}(C,D)$ by congruence
transformations into a direct sum of
indecomposable triples must have a
direct summand that is congruent to
$(I_1,\,\alpha I_1,\,\beta
I_1)^{\triangledown}.$

By simultaneous permutations of
rows and columns, ${\cal
T}(C,D)$ reduces to a direct sum
of triples of the form

\begin{equation}\label{vvv}
(I_1,0_1,0_1)^{\triangledown},\
\
(0_1,I_1,0_1)^{\triangledown},\
\
(0_1,0_1,I_1)^{\triangledown},\
\
(I_1,I_1,I_1)^{\triangledown},
\end{equation}
and of the triple ${\cal
G}(C,D)^{\triangledown}$ defined in
\eqref{5.q'}.

The triple ${\cal
G}(C,D)^{\triangledown}$ has no
direct summand $(I_1,\,\alpha
I_1,\,\beta I_1)^{\triangledown}$
for congruence since the pair
obtained from ${\cal
G}(C,D)^{\triangledown}$ by deleting
its last matrix is permutationally
congruent to
\[
(I_4,J_4(0))^{\triangledown}
\oplus\dots\oplus
(I_4,J_4(0))^{\triangledown} \quad
(\text{$n$ summands});
\]
this pair has no direct summand
$(I_1,\alpha I_1)^{\triangledown}$
for equivalence, and so for
congruence too. By Lemma
\ref{lem2}(b), $(I_1,\,\alpha
I_1,\,\beta I_1)^{\triangledown}$ is
congruent to one of the triples
\eqref{vvv}, hence it is congruent
to $(I_1,I_1,I_1)^{\triangledown}$,
and so $\alpha =\beta =1$; that is,
$ {\cal T}(A,B)\quad\text{is
congruent to}\quad {\cal T}(C,D). $
Due to \eqref{5.9'}, all the direct
summands of ${\cal T}(A,B)$ and
${\cal T}(C,D)$ coincide except for
${\cal G}(A,B)^{\triangledown}$ and
${\cal G}(C,D)^{\triangledown}$. By
Lemma \ref{lem2}(b), the triples
${\cal G}(A,B)^{\triangledown}$ and
${\cal G}(C,D)^{\triangledown}$ are
congruent. By Theorem
\ref{th2.1}(a), $(A,B)$ and $(C,D)$
are similar.
\end{proof}

\begin{corollary}\label{r6}
Let $U$ and $V$ be vector spaces and
$\dim V=3$. The problem of
classifying tensors $T\in U\otimes
U\otimes V$ that are symmetric
$($respectively, skew-symmetric$)$
on $U$ is wild since it reduces to
the classification problems
considered in Lemma \ref{t4.2}.
\end{corollary}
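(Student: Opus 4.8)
The plan is to set up a dictionary between tensors in $U\otimes U\otimes V$ that are symmetric (respectively, skew-symmetric) on $U$ and triples of symmetric (respectively, skew-symmetric) matrices taken up to congruence together with the substitutions \eqref{1.3z}, and then to apply Lemma \ref{t4.2}.

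First I would fix a basis $e_1,\dots,e_n$ of $U$ and a basis $v_1,v_2,v_3$ of $V$. Every $T\in U\otimes U\otimes V$ is uniquely $T=T_1\otimes v_1+T_2\otimes v_2+T_3\otimes v_3$ with $T_i\in U\otimes U$, and $T$ is symmetric (skew-symmetric) on $U$ --- that is, fixed by (negated by) the transposition of the two $U$-factors --- exactly when every $T_i$ is. Writing $T_i=\sum_{j,k}(A_i)_{jk}\,e_j\otimes e_k$ attaches to $T$ the triple $(A_1,A_2,A_3)$ of $n$-by-$n$ matrices, which are symmetric (skew-symmetric) precisely when $T$ is. Conversely every such triple arises this way, for every $n$.

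Next I would record how isomorphisms act. An isomorphism of two tensors in $U\otimes U\otimes V$ respecting the symmetry condition on $U$ is $\varphi\otimes\varphi\otimes\psi$ with $\varphi\in GL(U)$, $\psi\in GL(V)$. If $S$ is the matrix of $\varphi$ in the chosen basis, then $\varphi\otimes\varphi$ sends $A_i$ to $SA_iS^T$; replacing $S$ by $S^T$, this is the congruence $A_i\mapsto S^TA_iS$ performed simultaneously on $A_1,A_2,A_3$. If $[\gamma_{ij}]$ is the matrix of $\psi$, then $\psi$ sends $(A_1,A_2,A_3)$ to the triple in \eqref{1.3z} with $t=3$, and as $\psi$ ranges over $GL(V)$ the matrix $[\gamma_{ij}]$ ranges over all nonsingular $3$-by-$3$ matrices. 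Hence two such tensors are isomorphic if and only if the associated triples of symmetric (skew-symmetric) matrices are related by a congruence transformation followed by a substitution \eqref{1.3z}, which is exactly the equivalence treated in Lemma \ref{t4.2}.

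Consequently the classification of tensors in $U\otimes U\otimes V$ that are symmetric (skew-symmetric) on $U$, with $\dim V=3$, coincides with the classification problem of Lemma \ref{t4.2}, hence is wild; concretely, the triples ${\cal T}(A,B)$ of $350$-by-$350$ matrices built in the proof of Lemma \ref{t4.2} correspond to tensors with $\dim U=350$, so the reduction of the pairs-up-to-similarity problem transfers verbatim. The only point demanding care is bookkeeping the index and transpose conventions so that $\varphi\otimes\varphi$ really induces congruence (rather than some twisted variant) on the slices $A_i$; this is routine once the conventions are fixed.
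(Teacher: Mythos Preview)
Your argument is correct and is exactly the standard dictionary the paper has in mind; the paper gives no separate proof of this corollary, treating the reduction to Lemma~\ref{t4.2} as self-evident. One small slip: the triples ${\cal T}(A,B)$ have size $342+8n$, not $350$, once $A,B$ are $n$-by-$n$ (the $350$ in \eqref{5.9'} refers to the scalar template with $n=1$); this does not affect the argument.
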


\section{Description of
Lie algebras with central
commutator subalgebra of
dimension $2$} \label{s5}

In this section, we describe
Lie algebras with central
commutator subalgebra of
dimension $2$ using the
canonical form of pairs of
skew-symmetric matrices for
congruence. An analogous
description of local
commutative algebras with
$(\Rad\Lambda)^3=0$ and $\dim
(\Rad\Lambda)^2=3$ would be
more awkward since the
classification of pairs of
symmetric matrices up to
congruence is more
complicated (see Thompson's
article \cite{thom} with an
extensive bibliography, or
\cite[Theorem 4]{ser1}).

The problem of classifying Lie
algebras with central commutator
subalgebra of dimension $1$ is
trivial: by Lemma \ref{l5z} each of
them is isomorphic to exactly one
algebra on ${\mathbb F}^{n}$ with
multiplication
\begin{equation*}
[u,v]:=\left(u^T\!\begin{bmatrix}
0_p&0&0\\0&0&I_q\\0&-I_q&0
\end{bmatrix}\!
v,\,0,\dots,0\right)^T,
\end{equation*}
given by natural numbers $p$ and $q$
such that $p+2q=n$.

Define the $(m-1)$-by-$m$
matrices
\begin{equation*}\label{4b}
F_m=\begin{bmatrix}
1&0&&0\\&\ddots&\ddots&\\0&&1&0
\end{bmatrix},\quad
G_m=\begin{bmatrix}
0&1&&0\\&\ddots&\ddots&\\0&&0&1
\end{bmatrix}
\end{equation*}
for each natural number $m$. In
particular, $F_1=G_1=0_{01}$ and so
$(F_1,G_1)^{\triangledown}=
(0_{1},0_{1})$ by \eqref{4w}.

\begin{theorem} \label{t5}
Let $\mathbb F$ be an
algebraically closed field of
characteristic other than
two. Let $L$ be a Lie algebra
over ${\mathbb F}$ whose
commutator subalgebra is
central and has dimension
$2$. Then $L$ is isomorphic
to an algebra on ${\mathbb
F}^{n}$ with multiplication
\begin{equation}\label{5.1}
[u,v]:=\left(u^T\!
\begin{bmatrix} 0_2&0\\0&A
\end{bmatrix}\!v,\,u^T\!
\begin{bmatrix}
0_2&0\\0&B
\end{bmatrix}\!v,\,
0,\dots,0 \right)^T,
\end{equation}
given by a pair $(A,B)$ of
skew-symmetric $(n-2)$-by-$(n-2)$
matrices of the form
\begin{equation}\label{5.2}
\bigoplus_{i=1}^p
(I_{l_i},J_{l_i}(\lambda_{i}))
^{\triangledown} \oplus
\bigoplus_{j=1}^q
(F_{r_j},G_{r_j})^{\triangledown},
\qquad p\ge 0,\ q\ge 0,
\end{equation}
$(J_l(\lambda)$ denotes the
$l$-by-$l$ Jordan block with
eigenvalue $\lambda$, and
$(\dots)^{\triangledown}$ is defined
in \eqref{4x'q} with
$\varepsilon=-1)$ except for the
case
\begin{equation}\label{wd}
\lambda_1=\dots=\lambda_p,\qquad
l_1=\dots=l_p=r_1=\dots=r_q=1.
\end{equation}
The sum \eqref{5.2} is determined by
$L$ uniquely up to permutations of
summands and up to linear-fractional
transformations of the sequence of
eigenvalues
\begin{equation}\label{4.1a}
(\lambda_1,\dots,\lambda_p)
\longmapsto \left(\frac{\gamma
+\delta \lambda_1} {\alpha +\beta
\lambda_1},\dots, \frac{\gamma
+\delta \lambda_p} {\alpha +\beta
\lambda_p}\right),
\end{equation}
in which all $\alpha +\beta
\lambda_i$ are nonzero and $\alpha
\delta -\beta \gamma \ne 0$.
\end{theorem}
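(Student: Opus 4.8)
The plan is to read off the theorem from Lemma~\ref{l5z} together with the classical canonical form of a pair of skew-symmetric matrices under congruence. Applying Lemma~\ref{l5z} with $t=2$, every Lie algebra $L$ with central commutator subalgebra of dimension $2$ is isomorphic to exactly one algebra on $\mathbb F^{n}$ of the form \eqref{5.1} --- which is \eqref{5.1z} with $A_1=A$ and $A_2=B$ --- given by a pair $(A,B)$ of linearly independent skew-symmetric $(n-2)$-by-$(n-2)$ matrices, and this pair is determined by $L$ uniquely up to congruence and the substitutions \eqref{1.3z} with $t=2$, that is, up to the action $(A,B)\mapsto(\gamma_{11}A+\gamma_{12}B,\ \gamma_{21}A+\gamma_{22}B)$ with $[\gamma_{ij}]$ nonsingular of the group $GL_2(\mathbb F)$. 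So it remains to describe a canonical form of a linearly independent pair of skew-symmetric matrices for the joint action of congruence and this $GL_2$.

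First I would recall the Kronecker-type canonical form of a pair $(A,B)$ of skew-symmetric matrices under congruence over an algebraically closed field of characteristic $\ne2$ (see \cite{thom}, or \cite{ser1}): regarding $(A,B)$ as the pencil $xA+yB$, the pair is congruent to a direct sum, unique up to permutation of summands, of \emph{regular} blocks $(I_{l},J_{l}(\lambda))^{\triangledown}$ with $\lambda\in\mathbb F$ and of size $2l$, of blocks $(J_{m}(0),I_{m})^{\triangledown}$ of size $2m$ that carry the eigenvalue at infinity, and of \emph{singular} blocks $(F_{r},G_{r})^{\triangledown}$ of the odd size $2r-1$, where $(\,\cdot\,)^{\triangledown}$ uses $\varepsilon=-1$. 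Here one uses that skew-symmetry forces the regular Jordan blocks and the minimal-index blocks of the pencil to occur in transposed pairs, which then combine into the blocks listed above.

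Next I would track the $GL_2$-action on this canonical form. A substitution \eqref{1.3z} with matrix $[\gamma_{ij}]$ replaces the pencil $xA+yB$ by $x'A+y'B$, where $(x,y)\mapsto(x',y')$ is the invertible linear change of the pencil parameter induced by $[\gamma_{ij}]$; hence it leaves the singular blocks unchanged (their minimal indices are invariant under change of parameter, and over an algebraically closed field the congruence class of a skew-symmetric pencil is determined by its strict-equivalence invariants) and carries the regular part to the regular part, preserving the Jordan-block sizes attached to each eigenvalue while subjecting the eigenvalue sequence $(\lambda_1,\dots,\lambda_p)$ --- enlarged by a symbol $\infty$ for each block $(J_{m}(0),I_{m})^{\triangledown}$ --- to the linear-fractional transformation of $\mathbb P^1(\mathbb F)$ determined by $[\gamma_{ij}]$. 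Since only finitely many points of $\mathbb P^1(\mathbb F)$ carry regular blocks, one can choose a substitution moving $\infty$ to a scalar distinct from all the $\lambda_i$; after it, a further congruence brings $(A,B)$ to the form \eqref{5.2} with all $\lambda_i$ finite. The remaining freedom is then permutation of summands together with those linear-fractional transformations that keep every eigenvalue finite, namely the maps \eqref{4.1a} with all $\alpha+\beta\lambda_i\ne0$ and $\alpha\delta-\beta\gamma\ne0$.

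Finally I would single out the linearly independent pairs. A direct computation on the blocks shows that the pair $(A,B)$ of the form \eqref{5.2} is linearly \emph{dependent} exactly when each $l_i$ and each $r_j$ equals $1$ and all the $\lambda_i$ coincide --- which is condition \eqref{wd}; indeed $(F_1,G_1)^{\triangledown}=(0_1,0_1)$ and in that case $B=\lambda_1A$, whereas two distinct eigenvalues, a regular block of size $\ge4$, or a singular block of size $\ge3$ each force linear independence. Conversely, when \eqref{wd} fails the skew-symmetric pair \eqref{5.2} is linearly independent and, by the construction in the proof of Lemma~\ref{l5z}, is the defining pair of a Lie algebra with central commutator subalgebra of dimension $2$; combining this with the uniqueness clause of Lemma~\ref{l5z} yields both the existence and the uniqueness assertions of the theorem. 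The main obstacle is the middle step: one must check carefully, with the conventions of $(\,\cdot\,)^{\triangledown}$, $F_m$ and $G_m$ tracked exactly, that the congruence canonical form of a skew-symmetric pencil has precisely the stated shape, that a substitution \eqref{1.3z} acts on it as a linear-fractional transformation of the eigenvalues while fixing the singular part, and that \eqref{wd} is exactly the configuration excluded by the linear-independence requirement of Lemma~\ref{l5z}.
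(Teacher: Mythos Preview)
Your proposal is correct and follows essentially the same route as the paper: reduce via Lemma~\ref{l5z} to linearly independent skew-symmetric pairs modulo congruence and $GL_2$-substitutions, invoke the Kronecker canonical form \eqref{eqq}, track the substitution as a linear-fractional action on the eigenvalues while the singular part is preserved, move the eigenvalue at infinity to a finite one to obtain \eqref{5.2}, and identify \eqref{wd} as precisely the linearly dependent configuration. The only stylistic difference is that the paper argues the invariance of the singular summands via indecomposability and their odd size (so any substitution followed by congruence must restore them), and it writes out explicitly the computation \eqref{ttt}--\eqref{4.4} showing that $(I_{l_i},J_{l_i}(\lambda_i))^{\triangledown}$ is sent to $(I_{l_i},J_{l_i}((\gamma+\delta\lambda_i)/(\alpha+\beta\lambda_i)))^{\triangledown}$, whereas you appeal to the strict-equivalence invariants of the pencil; both justifications are valid.
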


\begin{proof}
By \cite{r.schar1}, \cite{ser1}, or
\cite{thom}, each pair of
skew-symmetric matrices over
$\mathbb F$ is congruent to a direct
sum of pairs of the form
\begin{equation}\label{eqq}
(I_m,
J_m(\lambda))^{\triangledown},\qquad
(J_m(0),I_m)^{\triangledown},\qquad
(F_m,G_m)^{\triangledown}
\end{equation}
(in the notation \eqref{4x'q} with
$\varepsilon=-1$), and this sum is
determined uniquely up to
permutation of summands.

Let $L$ be a Lie algebra of
dimension $n$ whose commutator
subalgebra is central and has
dimension $2$. By Lemma \ref{l5z}
for $t=2$, $L$ is isomorphic to an
algebra on ${\mathbb F}^{n}$ with
multiplication \eqref{5.1} given by
a pair $(A,B)$ of skew-symmetric
$(n-2)$-by-$(n-2)$ matrices, and
$(A,B)$ is determined by $L$
uniquely up to congruence and
invertible linear substitutions
\begin{equation}\label{5.n}
(A,B)\longmapsto (\alpha A+\beta
B,\,\gamma A+\delta B),\qquad \alpha
\delta -\beta \gamma \ne 0.
\end{equation}
By \eqref{eqq}, the pair $(A,B)$ is
congruent to a pair
\begin{equation}\label{4.2}
\bigoplus_{i=1}^{k}
(I_{l_i},J_{l_i}(\lambda_{i}))
^{\triangledown} \oplus
\bigoplus_{i=k+1}^{p}
(J_{l_i}(0),I_{l_i})
^{\triangledown} \oplus
\bigoplus_{j=1}^q(F_{r_j},G_{r_j})
^{\triangledown}
\end{equation}
determined by $(A,B)$ uniquely up to
permutation of summands.

Let us study how transformations
\eqref{5.n} change \eqref{4.2}. The
pairs \eqref{eqq} are indecomposable
with respect to congruence. Since
the first and the second pairs in
\eqref{eqq} have size $2m\times 2m$,
each indecomposable pair of
skew-symmetric matrices of size
$(2m+1)\times(2m+1)$ is congruent to
$(F_m,G_m)^{\triangledown}$. Each
transformation \eqref{5.n} is
invertible, so it transforms any
indecomposable pair of
skew-symmetric matrices to an
indecomposable one. Hence, although
each transformation \eqref{5.n} with
\eqref{4.2} may spoil summands
$(F_{r_j},G_{r_j})
^{\triangledown}$, but they are
restored by congruence
transformations.

If $k<p$, then we reduce the pair
\eqref{4.2} to a pair of the form
\eqref{5.2} (with other
$\lambda_1,\dots, \lambda_{k}$) as
follows. We convert all the summands
$(I_{l_i},J_{l_i}(\lambda_{i}))
^{\triangledown}$ and
$(J_{l_i}(0),I_{l_i})
^{\triangledown}$ to pairs with
nonsingular first matrices by any
transformation \eqref{5.n} given by
\[
\begin{bmatrix}\alpha &\beta \\
\gamma &\delta
\end{bmatrix}=
\begin{bmatrix}1&\beta \\0&1
\end{bmatrix},\quad \beta \ne 0,\
1+\beta \lambda_1\ne 0,\,\dots,\,
1+\beta \lambda_{k}\ne 0.
\]
Then we reduce each of these
summands to $(I,J_{l_i}(\lambda_i))
^{\triangledown}$ (with other
$\lambda_i$'s) by congruence
transformations using the following
fact: if matrix pairs $(M_1,M_2)$
and $(N_1,N_2)$ are equivalent,
i.e., $R(M_1,M_2)S=(N_1,N_2)$ for
some nonsingular $R$ and $S$, then
$(M_1,M_2)^{\triangledown}$ and
$(N_1,N_2)^{\triangledown}$ are
congruent:
\begin{equation}\label{ttt}
\begin{bmatrix}
  R&0\\ 0 &S^T
\end{bmatrix}
\begin{bmatrix}
  0&M_i\\ -M_i^T &0
\end{bmatrix}
\begin{bmatrix}
  R^T&0\\ 0 &S
\end{bmatrix}=
\begin{bmatrix}
  0&N_i\\ -N_i^T &0
\end{bmatrix}.
\end{equation}

Every transformation \eqref{5.n} for
which all $\alpha +\beta \lambda_i$
are nonzero, converts the summands
$(I_{l_i},J_{l_i}(\lambda_i))
^{\triangledown}$ of \eqref{5.2} to
the pairs \[\left(\alpha
I_{l_i}+\beta J_{l_i}(\lambda_i),
\gamma I_{l_i}+\delta
J_{l_i}(\lambda_i)\right)
^{\triangledown};\] by \eqref{ttt}
they are congruent to
\begin{equation}\label{4.4}
\left(I_{l_i},\ (\alpha I_{l_i}
+\beta J_{l_i}(\lambda_i))^{-1}
(\gamma I_{l_i} +\delta
J_{l_i}(\lambda_i))\right)
^{\triangledown}.
\end{equation}
The matrices $\alpha I_{l_i}+\beta
J_{l_i}(\lambda_i)$ and $\gamma
I_{l_i}+\delta J_{l_i}(\lambda_i)$
are triangular; their diagonal
entries are $\alpha +\beta
\lambda_i$ and $\gamma +\delta
\lambda_i$. Hence, the pair
\eqref{4.4} is congruent to
\[
\left(I_{l_i},\ J_{l_i}
\left(\frac{\gamma +\delta
\lambda_i} {\alpha +\beta \lambda_i}
\right)\right) ^{\triangledown},
\]
and the sequence of eigenvalues
changes by the rule
\eqref{4.1a}.

By Lemma \ref{l5z}, the matrices $A$
and $B$ in \eqref{5.1} must be
linearly independent. As follows
from \eqref{5.2}, $A$ and $B$ are
linearly dependent only if
\eqref{wd} holds.
\end{proof}

\begin{remark}
The theory of Lie rings and algebras
is tied to the theory of groups; see
\cite[Section 7]{bah} or \cite{zap}.
In particular, the results of
Sections \ref{s1} and \ref{s5} are
easily extended to every $p$-group
$G$ being the semidirect product of
the central commutator subgroup $G'$
of type $(p,\dots,p)$ and an abelian
group of type $(p,\dots,p)$. If $G$
is such a group, then
\[
G'=\langle
a_1\rangle_p\times\dots\times
\langle a_t\rangle_p ,\qquad
G/G'=\langle c_1\rangle_{p}
\times\dots\times \langle
c_n\rangle_{p}.
\]
Choosing $b_i\in c_i$, we may give
$G$ by the defining relations
\begin{gather*}\label{joj}
a_l^p=b_i^p=1,\qquad
[a_l,a_r]=[a_l,b_i]=1,\qquad
[b_i,b_j]=a_1^{\alpha_{1ij}}\cdots
a_t^{\alpha_{tij}},
\end{gather*}
in which $l,r\in\{1,\dots,t\}$,
$i,j\in\{1,\dots,n\}$, and
\[A_1=[\alpha_{1ij}],\,\dots,\,
A_t=[\alpha_{tij}]\] are linearly
independent skew-symmetric
$n$-by-$n$ matrices over the field
$\mathbb F_p$ of $p$ elements.
Conversely, each tuple
$(A_1,\dots,A_t)$ of linearly
independent skew-symmetric
$n$-by-$n$ matrices over $\mathbb
F_p$ gives such a group, and two
tuples give isomorphic groups if and
only if one reduces to the other by
congruence transformations and
substitutions \eqref{1.3z}, in which
the matrix $[\gamma_{ij}]$ is
nonsingular. Reasoning as in Theorem
\ref{t5}, we can describe such
groups having $G'$ of order $p^2$.
(A canonical form for congruence of
a pair of skew-symmetric matrices
over an arbitrary field is a direct
sum of pairs of the form \eqref{eqq}
with the Frobenius blocksa instead
of the Jordan blocks
$J_m(\lambda)$.) The problem of
classifying such groups with $G'$ of
order $p^3$ is hopeless since it
reduces to the problem of
classifying pairs of matrices over
$\mathbb F_p$ up to similarity. By
\cite{ser3}, the problem of
classifying finite $p$-groups with
central commutator subgroup of order
$p^2$ is hopeless in the same way
both for the groups in which $G'$ is
cyclic and for the groups in which
$G'$ is of type $(p,p)$. All finite
$p$-groups with central commutator
subgroup of order $p$ are easily
classified; see \cite{lie} and
\cite{serg}.
\end{remark}

\end{document}